\definecolor {processblue}{cmyk}{0.96,0,0,0}
\newtheorem{thm}{Theorem}[section]
\theoremstyle{definition}
\newtheorem{cor}[thm]{Corollary}
\newtheorem{prop}[thm]{Proposition}
\newtheorem{defn}[thm]{Definition}
\newtheorem{lem}[thm]{Lemma}
\newtheorem{rem}[thm]{Remark}
\numberwithin{equation}{section}
\begin{document}
\title[$z^\circ$-submodules of a reduced multiplication module]
{$z^\circ$-submodules of a reduced multiplication module}
\author{ F. Farshadifar}
\address{Department of Mathematics Education, Farhangian University, P.O. Box 14665-889, Tehran, Iran.}
\email{f.farshadifar@cfu.ac.ir}

\subjclass[2010]{16N99, 13C13, 13C99}%
\keywords {multiplication module, reduced module, $z^\circ$-ideal, $z^\circ$-submodule.}

\begin{abstract}
Let $R$ be a commutative ring with identity and $M$ be an $R$-module.
A proper  ideal $I$ of $R$ is said to be a $z^\circ$-ideal if for each $a \in I$ the intersection of all minimal prime ideals
containing $a$ is contained in $I$.
The purpose of this paper is to introduce the notion of $z^\circ$-submodules of $M$ as an extension of $z^\circ$-ideals of $R$. Moreover, we investigate some properties of this class of submodules when $M$ is a reduced multiplication $R$-module.
\end{abstract}
\maketitle

\section{Introduction}
\noindent
Throughout this paper, $R$ is a commutative ring with
identity.
A proper ideal $I$ of $R$ is called a \textit{$z$-ideal} whenever any two
elements of $R$ are contained in the same set of maximal ideals and $I$ contains
one of them, then it also contains the other one \cite{MR321915}.
The concept of a $z$-ideal, as used in the study of
rings of continuous functions, is capable of a purely
algebraic definition.
For each $a \in R$, let $\mathfrak{P}_a$, be the intersection of all minimal prime ideals of $R$ containing $a$.
A proper ideal $I$ of $R$ is called a $z^\circ$-ideal if for each $a \in I$
we have $\mathfrak{P}_a \subseteq I$ \cite{MR1736781, MR577573}.
In fact, the $z^\circ$-ideals are special types of $z$-ideals that have received a great deal of attention both in rings and
Riesz spaces.

For a submodule $N$ of an $R$-module $M$, let $\mathcal{M}(N)$ be the set of maximal submodules of $M$ containing
$N$ and $Max(M)$ be the set of all maximal submodules of $M$. The intersection of all maximal submodules of $M$ containing $N$ is said to be the \textit{Jacobson radical} of $N$ and denote by $Rad_N(M)$ \cite{MR3677368}. In case $N$ does not contained in
any maximal submodule, the Jacobson radical of $N$ is defined to be $M$. We denote the Jacobson radical of zero submodule of $M$ by $Rad_M(M)$. A proper submodule $N$ of $M$ is said to be a \textit{$z$-submodule} if for every $x,y \in M$, $\mathcal{M}(x)=\mathcal{M}(y)\not= \emptyset$ and $x \in N$ imply $y \in N$ \cite{F401}.
 A proper submodule $N$ of $M$ is said to be a \textit{strongly $z$-submodule} if $Rad_K(M) \subseteq N$  for all submodules $K$ of $N$ with $Rad_K(M)\not=M$ \cite{F401}.

Let $M$ be an $R$-module. A proper submodule $P$ of $M$ is said to be \textit{prime} if for any
$r \in R$ and $m \in M$ with $rm \in P$, we have $m \in P$ or $r \in (P :_R M)$ \cite{MR183747, MR498715}. In this case, $(P :_R M)$ is a prime ideal of $R$.
The \textit{prime spectrum} of $M$ denoted by $Spec(M)$ is the set of all prime submodules of $M$.
A prime submodule $P$ of $M$ is a minimal prime submodule over a submodule $N$ of $M$ if $P$ is a minimal element of the set of all
prime submodules of $M$ that contain $N$. \textit{A minimal prime submodule} of $M$ means a minimal prime submodule over the $0$ submodule of $M$.
The set of all minimal prime submodules of $M$ will be denoted by $Min^p(M)$. The intersection of all minimal prime submodules of $M$ containing a submodule $K$ of $M$ is denote by $\mathfrak{P}_K$. In case $K$ does not contained in
any minimal prime submodule of $M$, $\mathfrak{P}_K$ is defined to be $M$.  Also, the intersection of all minimal prime submodules of $M$ containing $x \in M$ is denote by $\mathfrak{P}_x$. In case $x$ does not contained in
any minimal prime submodule of $M$, $\mathfrak{P}_x$ is defined to be $M$. If $x\in M$ and $N$ is a submodule of $M$, define
$V (x) = \{P \in Min^p(M) : x\in P\}$ and $V (N) = \{P \in Min^p(M) : N \subseteq P\}$.

An $R$-module $M$ is said to be a \emph{multiplication module} if for every submodule $N$ of $M$ there exists an ideal $I$ of $R$ such that $N=IM$ \cite{Ba81}. Equivalently,
$M$ is a multiplication module if and only if for each submodule $N$ of $M$, we have
$N = (N :_R M)M$. An $R$-module $M$ is said to be \textit{reduced} if the intersection of all prime submodules of $M$
is equal to zero \cite{MR2839935}.

In Section 2 of this paper, we investigate some properties of reduced multiplication $R$-modules. In Section 3,
we define the notion of $z^\circ$-submodules of an $R$-module $M$ as a generalization of $z^\circ$-ideals and when $M$ is a reduced multiplication $R$-module obtaine some related results.
Some of the results in this paper are similar to those in \cite{MR194880} and \cite{MR1736781}.
\section{Reduced multiplication modules}
Let $N$ and $K$ be two submodules of an $R$-module $M$. The \emph{product} of $N$ and $K$ is defined by $(N:_RM)(K:_RM)M$  and denoted by $NK$ \cite{MR2335105}. Also, $N$ is said to be \textit{nilpotent} if $N^t = 0$ for some positive integer $t$, where $N^t$
means the product of $N$, $t$ times \cite{MR2335105}.

An element $m$ of a multiplication $R$-module $M$ is said to be \textit{nilpotent} if $(Rm)^k = 0$ for some positive integer $k$.
The set of all nilpotent elements of $M$ is denoted by $\mathfrak{N}_M$ \cite{MR1981026}.

\begin{cor}\label{cc1.3} \cite[Corollary 3.14]{MR1981026}
Let $M$ be a multiplication $R$-module. Then  $\mathfrak{N}_M$ is the intersection
of all prime submodules of $M$.
 \end{cor}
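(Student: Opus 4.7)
The plan is to establish both inclusions separately.

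For $\mathfrak{N}_M \subseteq \bigcap_{P \in Spec(M)} P$: given a nilpotent $m$, unfolding $(Rm)^k = 0$ via the definition of the submodule product yields $((Rm):_R M)^k M = 0$. For any prime submodule $P$, the elementwise definition of primeness easily upgrades to the auxiliary fact that $IN \subseteq P$ for an ideal $I$ and submodule $N$ forces $I \subseteq (P:_R M)$ or $N \subseteq P$. Iterating this with $I = ((Rm):_R M)$ applied to $N = I^{k-1}M, I^{k-2}M, \ldots, M$, and using $M \not\subseteq P$, I obtain $((Rm):_R M) \subseteq (P:_R M)$. Since $M$ is a multiplication module, $Rm = ((Rm):_R M) M \subseteq (P:_R M) M \subseteq P$, so $m \in P$.

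For the reverse inclusion I would argue by contrapositive. Given $m \notin \mathfrak{N}_M$, let $\mathcal{F}$ be the collection of submodules $N$ of $M$ satisfying $(Rm)^k \not\subseteq N$ for every $k \geq 1$. Then $0 \in \mathcal{F}$ and $\mathcal{F}$ is closed under unions of chains, so Zorn's lemma supplies a maximal element $P$; clearly $m \notin P$ and $P$ is proper. It then suffices to show $P$ is prime.

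The main obstacle is the primeness argument. Suppose toward contradiction there exist $r \in R \setminus (P:_R M)$ and $x \in M \setminus P$ with $rx \in P$. Then $P + Rx$ and $P + rM$ strictly contain $P$, so maximality forces $(Rm)^j \subseteq P + Rx$ and $(Rm)^\ell \subseteq P + rM$ for some $j, \ell \geq 1$. The multiplication property lets me rewrite $P + Rx = ((P:_R M) + (Rx:_R M))M$ and $P + rM = ((P:_R M) + Rr) M$; expanding the submodule product $(P + Rx)(P + rM)$ into four terms, three of them trivially land in $P$, while the remaining term $(Rx:_R M)\cdot Rr\cdot M = Rr(Rx:_R M)M$ lies in $Rrx \subseteq P$ because $rx \in P$. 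Combined with the inclusion $((Rm):_R M)^{j+\ell} \subseteq ((Rm)^j :_R M)((Rm)^\ell :_R M)$, this gives $(Rm)^{j+\ell} \subseteq (Rm)^j(Rm)^\ell \subseteq (P + Rx)(P + rM) \subseteq P$, contradicting $P \in \mathcal{F}$ and completing the proof.
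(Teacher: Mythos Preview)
The paper does not prove this statement at all: it is quoted verbatim as \cite[Corollary~3.14]{MR1981026} and used as a black box. So there is no in-paper argument to compare against; your proposal is being measured against the standard proof from the cited reference, which is exactly the module-theoretic analogue of the classical ``nilradical $=$ intersection of primes'' argument via Zorn's lemma. Your strategy matches that template and is essentially correct.

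One technical wrinkle deserves tightening. In the primeness step you rewrite $P+Rx=((P:_RM)+(Rx:_RM))M$ and $P+rM=((P:_RM)+Rr)M$ and then ``expand $(P+Rx)(P+rM)$ into four terms.'' But by definition $(P+Rx)(P+rM)=((P+Rx):_RM)((P+rM):_RM)M$, and the colon ideal $((P+Rx):_RM)$ may strictly contain $(P:_RM)+(Rx:_RM)$, so your four terms a priori cover only a sub-ideal. The fix is to bypass the submodule product entirely and work with $I=(Rm:_RM)$ directly: from $I^{j}M=(Rm)^{j}\subseteq P+Rx$ and $I^{\ell}M\subseteq P+rM$ you get
\[
I^{j+\ell}M=I^{j}(I^{\ell}M)\subseteq I^{j}(P+rM)\subseteq P+r(I^{j}M)\subseteq P+r(P+Rx)\subseteq P+Rrx\subseteq P,
\]
which is the contradiction you want. (Note also that with the iterated binary product one only has $(Rm)^{k}\supseteq I^{k}M$ in general, but that inclusion is the one you need in both directions, so nothing breaks.)
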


\begin{prop}\label{p991.11}
Let $M$ be a multiplication $R$-module. Then $\bigcap _{P \in Min^p(M)}P=\bigcap _{P \in Spec (M)}P$.
\end{prop}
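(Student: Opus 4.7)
The inclusion $\bigcap_{P \in Spec(M)} P \subseteq \bigcap_{P \in Min^p(M)} P$ is immediate since $Min^p(M) \subseteq Spec(M)$. The plan is to prove the reverse containment by establishing the standard fact that every prime submodule of $M$ contains a minimal prime submodule; once this is known, any $x \in \bigcap_{P \in Min^p(M)} P$ lies in every prime $Q \in Spec(M)$ (pick a minimal prime $P_0 \subseteq Q$, then $x \in P_0 \subseteq Q$), which gives the desired inclusion.

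To show every $Q \in Spec(M)$ contains a minimal prime, I would apply Zorn's Lemma to the poset
\[
\mathcal{C} = \{P \in Spec(M) : P \subseteq Q\}
\]
ordered by reverse inclusion. The set is nonempty because $Q \in \mathcal{C}$, so the real work is verifying that every chain has an upper bound in $\mathcal{C}$, equivalently, that the intersection $P^{*} = \bigcap_\alpha P_\alpha$ of a descending chain of prime submodules contained in $Q$ is again a prime submodule. Clearly $P^{*} \subseteq Q \neq M$. For primeness, suppose $rm \in P^{*}$ with $m \notin P^{*}$. Then $m \notin P_{\alpha_0}$ for some $\alpha_0$, and since $rm \in P_{\alpha_0}$, primeness of $P_{\alpha_0}$ forces $r \in (P_{\alpha_0} :_R M)$, i.e.\ $rM \subseteq P_{\alpha_0}$. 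For any $\alpha$ with $P_\alpha \subseteq P_{\alpha_0}$ one still has $m \notin P_\alpha$, and the same argument gives $rM \subseteq P_\alpha$; for $\alpha$ with $P_{\alpha_0} \subseteq P_\alpha$, trivially $rM \subseteq P_{\alpha_0} \subseteq P_\alpha$. Thus $rM \subseteq P^{*}$, so $P^{*}$ is prime.

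By Zorn's Lemma, $\mathcal{C}$ has a maximal element under reverse inclusion, namely a prime submodule $P_0 \subseteq Q$ that is minimal among primes. Since any prime submodule below $P_0$ would also lie in $\mathcal{C}$ and contradict maximality, $P_0$ is in fact a minimal prime submodule of $M$, completing the argument. Alternatively, one can route the proof through Corollary \ref{cc1.3}: it identifies $\bigcap_{P \in Spec(M)} P$ with $\mathfrak{N}_M$, so only the inclusion $\bigcap_{P \in Min^p(M)} P \subseteq \mathfrak{N}_M$ remains, and this reduces to the same Zorn argument.

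I expect the main obstacle to be exactly the verification that descending intersections of prime submodules remain prime; the case analysis above is standard but must be stated carefully because, unlike in rings, one has to track both whether $m \in P_\alpha$ and whether the colon ideal $(P_\alpha :_R M)$ contains $r$ uniformly along the chain.
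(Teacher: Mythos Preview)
Your proof is correct and follows the same strategy as the paper: both reduce the nontrivial inclusion to the fact that every prime submodule of $M$ contains a minimal prime submodule. The only difference is that the paper simply invokes this fact by citing \cite[Page 765]{MR932633}, whereas you supply a self-contained Zorn's Lemma argument (which, incidentally, does not use the multiplication hypothesis at all).
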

\begin{proof}
Clearly, $\bigcap _{P \in Spec (M)}P \subseteq \bigcap _{P \in Min^p(M)}P$. For reverse inclusion, assume contrary that there exists $x \in \bigcap_{P \in Min^p(M)}P$ buy $x \not \in \bigcap _{P \in Spec (M)}P$. Thus $x \not \in P'$ for some $P' \in Spec (M)$. By \cite[Page 765]{MR932633}, there exists $P'' \in Min^p(M)$ such that $P'' \subseteq P'$. Thus $x \not \in P''$, which is a contradiction.
\end{proof}

\begin{cor}\label{C1}
 A multiplication $R$-module $M$ is reduced if and only if $\mathfrak{N}_M=0= \mathfrak{P}_0$.
\end{cor}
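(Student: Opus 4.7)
The plan is to observe that this corollary is essentially a direct synthesis of the two results that immediately precede it, so the entire argument reduces to assembling the right equalities.

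First I would unpack what needs to be shown. By definition $M$ is reduced precisely when $\bigcap_{P \in Spec(M)} P = 0$. So the task is to show that, for a multiplication $R$-module $M$, the single condition $\bigcap_{P \in Spec(M)} P = 0$ is equivalent to the joint condition $\mathfrak{N}_M = 0$ and $\mathfrak{P}_0 = 0$.

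Next I would identify each of the three submodules with $\bigcap_{P \in Spec(M)} P$. Corollary \ref{cc1.3} gives $\mathfrak{N}_M = \bigcap_{P \in Spec(M)} P$ directly. Proposition \ref{p991.11} gives $\bigcap_{P \in Min^p(M)} P = \bigcap_{P \in Spec(M)} P$, and since $0$ lies in every submodule, the intersection of all minimal prime submodules containing $0$ is just $\bigcap_{P \in Min^p(M)} P$ (assuming at least one minimal prime submodule exists; otherwise $\mathfrak{P}_0 = M$ by the convention set in the introduction, but then the same convention prevents $\mathfrak{P}_0 = 0$ and also forces $Spec(M) = \emptyset$, making the reduced condition degenerate in the same way). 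Hence $\mathfrak{P}_0 = \bigcap_{P \in Spec(M)} P = \mathfrak{N}_M$.

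With the identification $\mathfrak{N}_M = \mathfrak{P}_0 = \bigcap_{P \in Spec(M)} P$ in hand, both directions of the biconditional are immediate: the vanishing of this common intersection is exactly the defining property of a reduced module. The only subtlety worth mentioning in the write-up is the degenerate case where $M$ has no minimal prime submodules (or no prime submodules at all), which is handled by the explicit convention in the introduction that $\mathfrak{P}_0 = M$ in that case; I would note this briefly to keep the equivalence clean. Beyond that, there is no real obstacle — the content is entirely carried by the preceding Corollary \ref{cc1.3} and Proposition \ref{p991.11}.
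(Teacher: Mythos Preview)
Your proposal is correct and follows essentially the same route as the paper, which simply cites Corollary~\ref{cc1.3} and Proposition~\ref{p991.11} in a single line. Your additional remarks on the degenerate case with no minimal prime submodules add clarity the paper omits, but the core argument is identical.
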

\begin{proof}
This follows from Corollary \ref{cc1.3} and Proposition \ref{p991.11}.
\end{proof}

Let $P$ be a prime submodule of an $R$-module $M$. Set
$O_P = \{x\in M : Ann(x) \not \subseteq  (P:_RM)\}$.
Then $O_P\subseteq P$ \cite{MR2839935}.
\begin{lem}\label{ll1.3} \cite[Corollary 3.3]{MR2839935}
Let $M$ be a reduced multiplication $R$-module. Then $P$ is a minimal prime submodule of $M$
if and only if $P = O_P$.
 \end{lem}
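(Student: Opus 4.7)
The plan is to dispose of the easy direction ($\Leftarrow$) first, and then handle the forward direction by transferring the question to the quotient ring $\bar R := R/Ann(M)$.

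For $(\Leftarrow)$, I assume $P = O_P$ and let $Q$ be any prime submodule with $Q \subseteq P$. For each $x \in P = O_P$ I pick $r \in Ann(x) \setminus (P :_R M)$; since $(Q :_R M) \subseteq (P :_R M)$, this same $r$ lies outside $(Q :_R M)$, so the equation $rx = 0 \in Q$ together with primeness of $Q$ forces $x \in Q$. Hence $P \subseteq Q$ and $P = Q$, so $P$ is a minimal prime submodule.

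For the forward direction, I first establish two preparatory facts. \emph{(a) $\bar R$ is reduced.} If $a^n \in Ann(M)$, then for every prime submodule $Q$ of $M$ one has $a^n \in (Q :_R M)$, and primeness of $(Q :_R M)$ gives $a \in (Q :_R M)$, i.e.\ $aM \subseteq Q$; intersecting over all prime submodules and using reducedness of $M$ yields $aM = 0$. \emph{(b) $\mathfrak{p} := (P :_R M)$ is a minimal prime of $R$ containing $Ann(M)$.} Any prime $\mathfrak{p}' \subseteq \mathfrak{p}$ with $\mathfrak{p}' \supseteq Ann(M)$ produces a proper prime submodule $\mathfrak{p}' M$ (a standard fact for multiplication modules), contained in $\mathfrak{p} M = P$, so minimality of $P$ forces $\mathfrak{p}' M = P$ and hence $\mathfrak{p}' = (\mathfrak{p}' M :_R M) = \mathfrak{p}$.

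Now fix $x \in P$. Because $M$ is multiplication, $P = \mathfrak{p} M$, so I can write $x = \sum_{i=1}^{k} a_i m_i$ with $a_i \in \mathfrak{p}$ and $m_i \in M$. Since $\bar{\mathfrak{p}} = \mathfrak{p}/Ann(M)$ is a minimal prime of the reduced ring $\bar R$, the standard localization argument --- $\bar R_{\bar{\mathfrak{p}}}$ is reduced and has $\bar{\mathfrak{p}} \bar R_{\bar{\mathfrak{p}}}$ as its only prime, so its nilradical is zero --- supplies for each $a_i$ some $s_i \in R \setminus \mathfrak{p}$ with $s_i a_i \in Ann(M)$. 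Setting $s = s_1 \cdots s_k$, primeness of $\mathfrak{p}$ gives $s \notin \mathfrak{p}$, and one computes $sx = \sum_i \bigl(\prod_{j \neq i} s_j\bigr)(s_i a_i) m_i = 0$, because each $s_i a_i$ annihilates $M$. Thus $s \in Ann(x) \setminus (P :_R M)$, so $x \in O_P$ as required.

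I expect the main obstacle to be the preparatory facts (a) and (b): both hinge essentially on the multiplication hypothesis and ought to be isolated as independent lemmas (possibly already available from Section 2 or from \cite{MR2839935}), after which the ring-theoretic endgame runs routinely.
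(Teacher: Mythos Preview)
The paper does not supply its own proof of this lemma: it is quoted verbatim as \cite[Corollary 3.3]{MR2839935} and used as a black box. So there is no in-paper argument to compare against, and your task reduces to whether your proof stands on its own.

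It does. The backward direction is clean (and implicitly uses that $P$ is already assumed prime, since $O_P$ is only defined for prime $P$). For the forward direction, your reduction to the ring $\bar R = R/Ann_R(M)$ is exactly the natural route, and the endgame---writing $x = \sum a_i m_i$ with $a_i \in \mathfrak p$, then using that each element of a minimal prime in a reduced ring is killed by something outside that prime---is correct. Your preparatory facts are both standard: (a) is essentially the argument behind Corollary~\ref{C1} combined with the primeness of $(Q:_RM)$ for prime $Q$, and (b) is the well-known order-preserving correspondence $P \leftrightarrow (P:_RM)$ between $Spec(M)$ and prime ideals of $R$ over $Ann_R(M)$, due to El-Bast and Smith \cite{MR932633}. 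The only point worth tightening is your appeal to $(\mathfrak p'M:_RM)=\mathfrak p'$ in (b); this is part of that same correspondence theorem rather than something automatic, so cite it explicitly. With that reference in place the argument is complete and is, in fact, close in spirit to how Samei proves it in \cite{MR2839935}.
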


\begin{lem}\label{ll1.4}
Let $M$ be a reduced multiplication $R$-module and $P$ be a minimal prime submodule of $M$.
 If $x\in  P$, then there exists $a \in Ann_R(x) \setminus (P:_RM)$.
\end{lem}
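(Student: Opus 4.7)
The plan is to derive this immediately from the characterization of minimal prime submodules given in Lemma \ref{ll1.3}, together with the definition of the set $O_P$ recalled just before that lemma. Since $M$ is a reduced multiplication $R$-module and $P$ is a minimal prime submodule of $M$, Lemma \ref{ll1.3} yields the equality $P = O_P$.

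Now I would unpack this. Recall that by definition
\[
O_P = \{ y \in M : \mathrm{Ann}(y) \not\subseteq (P :_R M) \}.
\]
So, given $x \in P$, the equality $P = O_P$ forces $x \in O_P$, which by the very definition of $O_P$ means $\mathrm{Ann}_R(x) \not\subseteq (P :_R M)$. Picking any element $a$ in the set-theoretic difference $\mathrm{Ann}_R(x) \setminus (P :_R M)$ produces exactly the required element.

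The only thing worth verifying carefully is that the hypotheses of Lemma \ref{ll1.3} are genuinely in force here, namely that $M$ being a reduced multiplication module is precisely the standing assumption of the present lemma, and that $P$ is assumed to be a \emph{minimal} prime submodule (not merely prime), so that one really obtains $P = O_P$ and not just $O_P \subseteq P$. There is no real obstacle: once Lemma \ref{ll1.3} is invoked, the conclusion is just a restatement of membership in $O_P$, so the proof should be only one or two lines.
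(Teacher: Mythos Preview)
Your proposal is correct and takes exactly the same approach as the paper: the paper's proof is the single line ``This follows from Lemma \ref{ll1.3},'' and what you have written is precisely the unpacking of that reference via the definition of $O_P$.
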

\begin{proof}
This follows from Lemma \ref{ll1.3}.
\end{proof}

\begin{rem}\label{rr1.11}
Let $M$ be a multiplication $R$-module and $\Omega$ be a subset of $Min^p(M)$. Set $\mathfrak{P}_{\Omega}=\cap \{P: P \in \Omega\}$.
A subset $\Omega $ of $Min^p(M)$ is said to be closed if $\Omega=V(\mathfrak{P}_{\Omega})$. With this notion of
closed set, one can see that the space of minimal prime submodules of $M$ becomes a topological space.
\end{rem}

\begin{thm}\label{trt1.11}
Let $M$ be a reduced multiplication $R$-module. Then for each $x \in M$,  we have $V(Ann_R(x)M)=Min^p(M) \setminus V(x)$. In particular, $V(Ann_R(x)M)$ and $V(x)$ are disjoint open-and-closed sets.
\end{thm}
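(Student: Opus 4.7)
The plan is to prove the set equality by a direct double-inclusion argument, and then deduce the clopen claim from the fact that sets of the form $V(\cdot)$ are closed in the topology described in Remark \ref{rr1.11}.

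For the inclusion $V(Ann_R(x)M) \subseteq Min^p(M)\setminus V(x)$, I would argue by contradiction: take $P \in Min^p(M)$ with $Ann_R(x)M \subseteq P$ and suppose $x \in P$. Then Lemma \ref{ll1.4} supplies an $a \in Ann_R(x)\setminus (P:_RM)$. But $a \in Ann_R(x)$ forces $aM \subseteq Ann_R(x)M \subseteq P$, hence $a \in (P:_RM)$, contradicting the choice of $a$. So $x \notin P$.

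For the reverse inclusion, let $P \in Min^p(M)$ with $x \notin P$. For any $a \in Ann_R(x)$ we have $ax = 0 \in P$; since $P$ is prime and $x \notin P$, the definition of prime submodule gives $a \in (P:_RM)$, i.e.\ $aM \subseteq P$. Running $a$ through $Ann_R(x)$ yields $Ann_R(x)M \subseteq P$, so $P \in V(Ann_R(x)M)$. This step is the conceptual heart and should be the easiest; the earlier direction is the one that actually uses reducedness (via Lemma \ref{ll1.4}).

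For the final sentence, I would observe that in the topology of Remark \ref{rr1.11}, every set of the form $V(N)$ is closed: indeed, $\mathfrak{P}_{V(N)} \supseteq N$, so $V(\mathfrak{P}_{V(N)}) \subseteq V(N)$, while the reverse inclusion is tautological. Applying this to $N = Rx$ and $N = Ann_R(x)M$ shows that $V(x)$ and $V(Ann_R(x)M)$ are both closed. The equality just proved makes them complementary in $Min^p(M)$, so each is also open, and they are disjoint by construction.

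The only point that might require a careful line is verifying that $V(Rx) = V(x)$ (so that $V(x)$ fits into the framework where closedness was just argued) and checking the edge case $V(N) = \emptyset$ in the closedness argument, but both are routine given the convention $\mathfrak{P}_\emptyset = M$ adopted in the paper.
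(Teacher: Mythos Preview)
Your proof is correct and follows essentially the same route as the paper: both directions of the set equality are argued exactly as in the paper (Lemma~\ref{ll1.4} for disjointness, primeness of $P$ for the reverse inclusion), and the clopen claim is deduced from closedness plus complementarity. Your write-up is slightly more explicit than the paper's in justifying why sets of the form $V(N)$ are closed, but the structure and key ideas are identical.
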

\begin{proof}
If $P\in V(x)$, then by Lemma \ref{ll1.4}, $Ann_R(x) \not\subseteq (P:_RM)$ and so $Ann_R(x)M  \not\subseteq  (P:_RM)M=P$. Thus $V(Ann_R(x)M)\cap V(x)=\emptyset$. On the other
hand, if $P\in Min^p(M) \setminus V(x)$, then for any $b \in Ann_R(x)$, we have $bx=0 \subseteq P$. Since
$x \not \in P$ and $P$ is prime, $bM \subseteq P$. Therefore, $P \in V(Ann_R(x)M)$.  Thus $V(Ann_R(x)M)=Min^p(M) \setminus V(x)$. Both sets $V(Ann_R(x)M)$ and $V(x)$ are closed, and
since they are complementary, they are also open.
\end{proof}

\begin{cor}\label{cc51.11}
Let $M$ be a reduced multiplication $R$-module.  Then
$Min^p(M)$ is a Hausdorff space with a base of open-and-closed sets.
\end{cor}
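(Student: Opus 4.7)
The plan is to extract both conclusions directly from Theorem \ref{trt1.11}, which already delivers the crucial fact that the sets $V(x)$ and $V(Ann_R(x)M)$ partition $Min^p(M)$ into two clopen pieces.

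For the Hausdorff property, I will begin with distinct minimal prime submodules $P_1,P_2\in Min^p(M)$. Since both are minimal, neither can contain the other, so I may pick $x\in P_1\setminus P_2$. Then $P_1\in V(x)$ while $P_2\notin V(x)$, and Theorem \ref{trt1.11} tells me that $P_2\in Min^p(M)\setminus V(x)=V(Ann_R(x)M)$. The same theorem asserts that $V(x)$ and $V(Ann_R(x)M)$ are disjoint clopen sets, which gives the required open separation of $P_1$ and $P_2$.

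For the base of open-and-closed sets, I will unwind the topology introduced in Remark \ref{rr1.11}. Its closed sets are the $V(\mathfrak{P}_\Omega)$, and each of these may be written as $\bigcap_{x\in \mathfrak{P}_\Omega} V(x)$ since $P$ contains $\mathfrak{P}_\Omega$ iff $P$ contains every $x\in \mathfrak{P}_\Omega$. Consequently every open set is a union of sets of the form $Min^p(M)\setminus V(x)$. By Theorem \ref{trt1.11} each such set equals $V(Ann_R(x)M)$, which is simultaneously open and closed. Therefore the family $\{V(Ann_R(x)M):x\in M\}$ constitutes a base of clopen sets for the topology.

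There is no real obstacle here beyond carefully matching the topology of Remark \ref{rr1.11} to the complement description supplied by Theorem \ref{trt1.11}; the only point that deserves a line of justification is that two distinct minimal primes are incomparable, so that an element $x\in P_1\setminus P_2$ exists and the above separation argument can start.
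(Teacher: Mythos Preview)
Your proposal is correct and follows essentially the same approach as the paper: separate distinct minimal primes $P_1,P_2$ by choosing $x\in P_1\setminus P_2$ and invoking Theorem~\ref{trt1.11} to get the disjoint clopen sets $V(x)\ni P_1$ and $V(Ann_R(x)M)\ni P_2$, then observe that the $V(x)$ form a base for the closed sets so that their complements $V(Ann_R(x)M)$ form a clopen base for the open sets. Your write-up is slightly more explicit (you justify incomparability of minimal primes and spell out $V(\mathfrak{P}_\Omega)=\bigcap_{x\in\mathfrak{P}_\Omega}V(x)$), but the argument is the same.
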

\begin{proof}
 Given $P \not=P'\in Min^p(M)$, let $x \in P\setminus P'$. Then $V(x)$ and $V(Ann_R(x)M)$ are
disjoint open sets containing $P $ and $P'$, respectively. Hence $Min^p(M)$ is a Hausdorff
space. The family $\{V(x)\}$ is a base for the closed
sets, so that $V(Ann_R(x)M)$ is a base for the open sets. Each member of
the latter base is closed.
\end{proof}

The set of torsion elements
of $M$ with respect to $R$ is $T_0(M) = \{m \in M |rm = 0 \ for \ some\ 0 \not= r \in R\}$.
\begin{cor}\label{c1.151}
Let $M$ be a faithful reduced multiplication $R$-module. Then
$x\in M$ belongs to some minimal prime submodule $P$ of $M$ if and only if $x \in T_0(M)$, i.e., $T_0(M)=\cup_{P \in Min^p(M)}P$.
\end{cor}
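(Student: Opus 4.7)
The plan is to prove both directions using the machinery just developed, with faithfulness entering precisely at the points where we need to pass between annihilator information and nonzero scalars.

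For the forward direction, suppose $x \in P$ for some $P \in Min^p(M)$. I would apply Lemma \ref{ll1.4} directly to obtain an element $a \in Ann_R(x) \setminus (P :_R M)$. The key observation is that since $M$ is faithful, $(0 :_R M) = 0 \subseteq (P :_R M)$, so $a \notin (P :_R M)$ forces $a \neq 0$. Thus $a$ is a nonzero element of $R$ with $ax = 0$, proving $x \in T_0(M)$.

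For the reverse direction, suppose $x \in T_0(M)$, so that $rx = 0$ for some $0 \neq r \in R$, and assume for contradiction that $x \notin P$ for every $P \in Min^p(M)$, i.e., $V(x) = \emptyset$. By Theorem \ref{trt1.11}, this gives $V(Ann_R(x)M) = Min^p(M) \setminus V(x) = Min^p(M)$, so $Ann_R(x)M \subseteq P$ for every minimal prime submodule $P$. Intersecting over $P \in Min^p(M)$ and invoking Corollary \ref{C1} (the hypothesis that $M$ is reduced gives $\mathfrak{P}_0 = \bigcap_{P \in Min^p(M)} P = 0$), I conclude $Ann_R(x)M = 0$, whence $Ann_R(x) \subseteq (0 :_R M)$. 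Faithfulness then yields $Ann_R(x) = 0$, contradicting $0 \neq r \in Ann_R(x)$.

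I do not expect any real obstacle: the work has been front-loaded into Theorem \ref{trt1.11} and Corollary \ref{C1}, and the only subtlety is noticing that faithfulness is used twice in exactly the same way — to upgrade ``not in a prime that contains $(0 :_R M)$'' (resp.\ ``annihilator of an element lands in $(0:_R M)$'') into a genuine statement about nonzero (resp.\ zero) elements of $R$. Once this is set up, both inclusions of the claimed equality $T_0(M) = \bigcup_{P \in Min^p(M)} P$ drop out immediately.
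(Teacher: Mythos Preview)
Your proof is correct and follows essentially the same route as the paper's: Lemma~\ref{ll1.4} for the forward direction, and Theorem~\ref{trt1.11} together with $\mathfrak{P}_0 = 0$ (Corollary~\ref{C1}) for the converse. One minor remark: faithfulness is not actually needed in the forward direction, since $0 \in (P:_RM)$ trivially, so $a \notin (P:_RM)$ already forces $a \neq 0$; the paper invokes faithfulness only in the reverse implication, to pass from $Ann_R(x)M = 0$ to $Ann_R(x) = 0$.
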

\begin{proof}
Let $x \in P$ for some minimal prime submodule $P$ of $M$. Then by Lemma \ref{ll1.4}, $Ann_R(x) \not \subseteq (P:_RM)$. Thus $Ann_R(x) \not=0$ and so
$x \in T_0(M)$. Conversely, consider an element $x$ that belongs to no minimal prime submodule. Then $V(x)=\emptyset$ and so
$V(Ann_R(x)M)=Min^p(M)$  by Theorem \ref{trt1.11}. This implies that $Ann_R(x)M\subseteq \mathfrak{P}_0=0$. Since $M$ is faithful, $Ann_R(x)=0$. Thus $x \not \in T_0(M)$, as desired.
\end{proof}

\begin{prop}\label{p1.11}
Let $M$ be a reduced multiplication $R$-module. Then we have the following.
\begin{itemize}
\item [(a)] If $I$ is an ideal of $R$, then $M/(0:_MI)$ is a reduced $R$-module.
\item [(b)] If $P$ is a prime submodule of $M$ contains $(0:_MI)$ for some ideal $I$ of $R$
and if $P/(0:_MI)$ is a minimal prime submodule of $M/(0:_MI)$. Then $P$ is a minimal prime submodule of $M$.
\end{itemize}
\end{prop}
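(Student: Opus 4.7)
The plan is to prove (a) first and then use it in (b).

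For (a), the idea is direct: it suffices to show that any $x\in M$ that lies in every prime submodule of $M$ containing $(0:_M I)$ actually lies in $(0:_M I)$. I would argue by contradiction. If $Ix\neq 0$, pick $r\in I$ with $rx\neq 0$; since $M$ is reduced, the intersection of all prime submodules of $M$ is zero, so some prime submodule $P'$ of $M$ satisfies $rx\notin P'$. Then automatically $x\notin P'$, and I would verify $(0:_M I)\subseteq P'$: for any $z\in(0:_M I)$, $rz=0\in P'$, so by primeness of $P'$ either $z\in P'$ or $r\in(P':_R M)$, and the latter would force $rx\in rM\subseteq P'$, contradicting the choice of $P'$. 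This contradicts the condition on $x$, giving $Ix=0$.

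For (b), I would apply Lemma \ref{ll1.3} to both $M$ and to the reduced multiplication module $M/(0:_M I)$ (the latter is reduced by part (a)). The goal is to verify $P=O_P$, which identifies $P$ as a minimal prime submodule of $M$. Fix $x\in P$; I want $\mathrm{Ann}_R(x)\not\subseteq(P:_R M)$. If $x\in(0:_M I)$, then $I\subseteq\mathrm{Ann}_R(x)$, so any $r\in I\setminus(P:_R M)$ suffices. If $x\notin(0:_M I)$, Lemma \ref{ll1.3} applied at the minimal prime submodule $P/(0:_M I)$ of $M/(0:_M I)$ yields $s\in R\setminus(P:_R M)$ with $s\bar x=\bar 0$; equivalently, $sI\subseteq\mathrm{Ann}_R(x)$. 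Picking $r\in I\setminus(P:_R M)$, the product $sr$ lies in $\mathrm{Ann}_R(x)$ and outside $(P:_R M)$ by primeness. Both cases therefore reduce to producing some $r\in I\setminus(P:_R M)$.

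The main obstacle is accordingly to establish the claim $I\not\subseteq(P:_R M)$ under the given hypotheses. I expect to handle this by contradiction. Passing to $R/\mathrm{Ann}_R(M)$ (which is reduced, since any nilpotent of $R$ annihilates a reduced multiplication module) reduces to the case where $M$ is faithful over a reduced ring $R$, so that $\mathfrak{p}:=(P:_R M)$ is a prime of $R$ minimal over $J:=(0:_R I)$. If $I\subseteq\mathfrak{p}$, then for each $r\in I$ the standard description of minimality over $J$ yields $t\in R\setminus\mathfrak{p}$ and $n\geq 1$ with $tr^n\in J$; multiplying by $r$ gives $tr^{n+1}\in tr^n I=0$, and the reducedness of $R$ forces $tr=0$. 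Assuming $I$ is finitely generated, taking the product of finitely many such $t$'s produces a single element $t\in R\setminus\mathfrak{p}$ annihilating all of $I$, i.e., $t\in J\subseteq\mathfrak{p}$ --- a contradiction that forces $I\not\subseteq(P:_R M)$ and completes (b).
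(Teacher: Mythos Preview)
Your argument for (a) is correct. It also differs from the paper's: the paper shows that $M/(0:_MI)$ has no nonzero nilpotent submodule (via Corollary~\ref{C1}), whereas you work directly with prime submodules; your route has the minor bonus of not invoking the multiplication hypothesis in part (a).

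For (b) your overall strategy coincides with the paper's---use (a) together with Lemma~\ref{ll1.3} to reduce to checking $P=O_P$---and you correctly extract from $\bar x\in O_{P/(0:_MI)}$ an element $s\notin(P:_RM)$ with $sI\subseteq\mathrm{Ann}_R(x)$. The gap is in your final reduction: you make everything hinge on the auxiliary claim $I\not\subseteq(P:_RM)$, and your argument for that claim explicitly \emph{assumes $I$ is finitely generated}, which is not among the hypotheses. Worse, the auxiliary claim itself can fail. Take $R=M=\prod_{n\ge1}\mathbb{F}_2$ and $I=\bigoplus_{n\ge1}\mathbb{F}_2$; then $(0:_MI)=0$, so every minimal prime $P$ of $R$ is minimal over $(0:_MI)$, yet for any free-ultrafilter prime $P$ one has $I\subseteq P=(P:_RM)$. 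Thus (b) holds in this example (trivially, since $P$ is already a minimal prime), but your route to it is blocked. The paper does not pass through the claim $I\not\subseteq(P:_RM)$ at all; from $\bar x\in O_{P/(0:_MI)}$ it moves, via a short displayed step, directly to $\mathrm{Ann}_R(x)\not\subseteq(P:_RM)$. You should try to reach $\mathrm{Ann}_R(x)\not\subseteq(P:_RM)$ without the detour through~$I$.
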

\begin{proof}
(a) A nilpotent submodule of $M/(0:_MI)$ has the form $K/(0:_MI)$,
where $K$ is a submodule of $M$ and $K^n \subseteq (0:_MI)$ for some positive integer $n$. Then $(IK)^n=0$  and since $M$ has
no non-zero nilpotent submodule, $IK=0$. It follows that $K \subseteq (0:_MI)$, as needed.

(b) First note that as $M$ is a multiplication $R$-module, we have  $M/(0:_MI)$ is a multiplication $R$-module. Also, by part (a),  $M/(0:_MI)$ is a reduced $R$-module. Thus $P/(0:_MI)=O_{P/(0:_MI)}$ by Lemma \ref{ll1.3}. It is enough to we show that $P = O_P$ by Lemma \ref{ll1.3}. Let $x \in P$. Then $x+(0:_MI) \in P/(0:_MI)=O_{P/(0:_MI)}$. This implies that
$$
Ann_R(x) \cap Ann_R((0:_MI)) \not\subseteq (P/(0:_MI):_RM/(0:_MI).
$$
It follows that $Ann_R(x) \not \subseteq (P:_RM)$, as needed.
\end{proof}

\begin{thm}\label{t1.12}
Let $M$ be a reduced multiplication $R$-module. Then for each ideal $I$ of $R$, $(0:_MI)=\mathfrak{P}_{(0:_MI)}$.
\end{thm}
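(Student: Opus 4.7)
The plan is to prove the nontrivial inclusion $\mathfrak{P}_{(0:_MI)} \subseteq (0:_MI)$; the reverse containment $(0:_MI) \subseteq \mathfrak{P}_{(0:_MI)}$ is immediate from the definition of $\mathfrak{P}_{(0:_MI)}$ as an intersection of submodules each of which contains $(0:_MI)$ (and if $(0:_MI)$ lies in no minimal prime, then by convention $\mathfrak{P}_{(0:_MI)} = M$, but in that case $(0:_MI) = M$ as well, since a reduced multiplication module has its proper submodules cut out by minimal primes via Corollary \ref{C1}, so equality is trivial).

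The key idea is to pass to the quotient $\overline{M} = M/(0:_MI)$ and exploit the two parts of Proposition \ref{p1.11}. By Proposition \ref{p1.11}(a), $\overline{M}$ is reduced, and as a quotient of a multiplication module it is again multiplication (the same fact implicitly used in the proof of Proposition \ref{p1.11}(b)). Therefore Corollary \ref{C1}, applied to $\overline{M}$, tells us that $\mathfrak{P}_{\overline{0}} = \overline{0}$, i.e.\ the intersection of all minimal prime submodules of $\overline{M}$ is zero.

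With this in hand, I would pick $x \in \mathfrak{P}_{(0:_MI)}$ and show $\overline{x} = 0$ by verifying $\overline{x} \in Q$ for every $Q \in Min^p(\overline{M})$. Under the usual bijection between prime submodules of $\overline{M}$ and prime submodules of $M$ containing $(0:_MI)$, any such $Q$ lifts to a prime submodule $P$ of $M$ with $(0:_MI) \subseteq P$ and $P/(0:_MI) = Q \in Min^p(\overline{M})$. Proposition \ref{p1.11}(b) is then exactly the tool needed: it guarantees that such a $P$ is in fact a minimal prime submodule of $M$ (still containing $(0:_MI)$). Hence $P$ belongs to the family whose intersection defines $\mathfrak{P}_{(0:_MI)}$, so $x \in P$ and $\overline{x} \in Q$. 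Since $Q$ was arbitrary, $\overline{x}$ lies in the intersection of all minimal prime submodules of $\overline{M}$, which is $\overline{0}$. Therefore $x \in (0:_MI)$.

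The only conceptually delicate point is the direction of Proposition \ref{p1.11}(b): we need minimal primes of the quotient to lift to minimal primes of $M$, not the other way around. Fortunately that is precisely what Proposition \ref{p1.11}(b) provides, so the argument is really an assembly of the ingredients already developed in Section 2 rather than a new calculation.
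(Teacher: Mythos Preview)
Your argument is correct and is essentially the paper's own proof: both pass to the quotient $\overline{M}=M/(0:_MI)$, use Proposition~\ref{p1.11}(a) and Corollary~\ref{C1} to conclude that the intersection of $Min^p(\overline{M})$ is zero, and then invoke Proposition~\ref{p1.11}(b) to lift minimal primes of $\overline{M}$ to minimal primes of $M$. The paper phrases this contrapositively and inserts an auxiliary element $ax$ (with $a\in I$) before applying Corollary~\ref{C1}, but that step is not needed---your direct formulation is a cleaner version of the same argument.
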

\begin{proof}
Clearly, $(0:_MI)\subseteq \mathfrak{P}_{(0:_MI)}$. For reverse inclusion we show that if $x \not \in (0:_MI)$, then there exists $ P \in Min^p(M)$ such that $x \not \in P$. We have $xI \not=0$ and hence $xa\not=0$ for some $a \in I$. Assert that $xa \not \in (0:_MI)$ since otherwise, $xa^2=0$. It follows that $(Rxa)^2=0$ and so $xa=0$ since $M$ is reduced. Now since by Proposition \ref{p1.11} (a), $M/(0:_MI)$ is a reduced module, there exists $P/(0:_MI) \in Min^p(M/(0:_MI))$ such that $ax+(0:_MI) \not \in P/(0:_MI)$ by Corollary \ref{C1}. Hence $ax \not \in P$ and so $x \not \in P$. By Proposition \ref{p1.11} (b), $P \in Min^p(M)$ as desired.
\end{proof}

\begin{thm}\label{t1.45}
Let $M$ be a reduced multiplication $R$-module. Then we have the following.
\begin{itemize}
\item [(a)] $V(x)=V((0:_MAnn_R(x)))$ for each $x \in M$.
\item [(b)] $(0:_MAnn_R(Jx))=(0:_MAnn_R(JM)) \cap (0:_MAnn_R(x))$ for each ideal $J$ of $R$ and $x \in M$.
\end{itemize}
\end{thm}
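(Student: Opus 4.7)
The plan is to handle (a) directly from Lemma \ref{ll1.4} and the primeness of minimal prime submodules, and then reduce (b) to a single cancellation lemma valid in any reduced multiplication module, namely $KM\cap(0:_M K)=0$ for every ideal $K$ of $R$.

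For (a), the inclusion $V((0:_M Ann_R(x)))\subseteq V(x)$ is immediate since $x\in(0:_M Ann_R(x))$. For the reverse, given $P\in V(x)$, Lemma \ref{ll1.4} supplies an element $a\in Ann_R(x)\setminus(P:_R M)$; then any $y\in(0:_M Ann_R(x))$ satisfies $ay=0\in P$ with $a\notin(P:_R M)$, and primeness of $P$ forces $y\in P$.

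For (b), the inclusion $(0:_M Ann_R(Jx))\subseteq(0:_M Ann_R(JM))\cap(0:_M Ann_R(x))$ is formal, since $Ann_R(x)$ and $Ann_R(JM)$ both lie in $Ann_R(Jx)$, so their sum does too, and $(0:_M\,\cdot\,)$ reverses inclusions. For the reverse, suppose $y\in(0:_M Ann_R(x))\cap(0:_M Ann_R(JM))$ and $r\in Ann_R(Jx)$; I aim to show $ry=0$. From $rJx=0$ I get $rJ\subseteq Ann_R(x)$, so $Ann_R(x)y=0$ yields $J(ry)=0$, i.e., $ry\in(0:_M J)$. The multiplication hypothesis then gives $(0:_M J)=((0:_M J):_R M)\cdot M$, and a direct computation shows $((0:_M J):_R M)=Ann_R(JM)$; so $ry\in Ann_R(JM)\cdot M$. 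Simultaneously, $Ann_R(JM)\cdot ry=r\cdot(Ann_R(JM)y)=0$ places $ry$ in $(0:_M Ann_R(JM))$. So, with $K=Ann_R(JM)$, I will have $ry\in KM\cap(0:_M K)$.

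The main obstacle is therefore the cancellation lemma $KM\cap(0:_M K)=0$, which is where the reduced multiplication hypothesis enters essentially. My plan here is to first show $KM\cdot(0:_M K)=0$ using the paper's submodule product: a short calculation of $(KM:_R M)\cdot((0:_M K):_R M)$ shows this ideal product lies in $Ann_R(M)$, so the submodule product vanishes. Since any prime submodule $P$ of the multiplication module $M$ satisfies $IM\subseteq P$ iff $I\subseteq(P:_R M)$, and $(P:_R M)$ is a prime ideal of $R$, $P$ is prime with respect to submodule products; hence $KM\cdot(0:_M K)\subseteq P$ forces $KM\subseteq P$ or $(0:_M K)\subseteq P$ at each $P\in Min^p(M)$. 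Therefore $KM\cap(0:_M K)$ lies in every minimal prime submodule, and Corollary \ref{C1} collapses this intersection to $\mathfrak{P}_0=0$. This yields $ry=0$ and completes (b).
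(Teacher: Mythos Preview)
Your proof is correct. Part (a) is verbatim the paper's argument. For part (b), the paper also starts by taking $z$ in the intersection and $t\in Ann_R(Jx)$, obtains $tJz=0$, and then uses the multiplication hypothesis---but from that point the two arguments diverge. The paper writes $Rtz=IM$ for some ideal $I$, observes $I\subseteq Ann_R(JM)$ and hence $Iz=0$; from $t(Rz:_RM)\subseteq I$ it deduces $t(Rz)^2=0$ and then $(Rtz)^2=0$, finishing with the nilpotent-free characterization of reducedness (Corollary~\ref{C1} via $\mathfrak{N}_M=0$). You instead rewrite $(0:_MJ)=Ann_R(JM)M$, place $ry$ simultaneously in $KM$ and in $(0:_MK)$ for $K=Ann_R(JM)$, and then prove the standalone cancellation lemma $KM\cap(0:_MK)=0$ by showing the submodule product vanishes and invoking the minimal-prime characterization $\mathfrak{P}_0=0$. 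Your route is a bit more structural---the cancellation lemma is reusable and the identification $((0:_MJ):_RM)=Ann_R(JM)$ is cleaner than the paper's ad hoc choice of $I$---at the cost of passing through every minimal prime rather than exhibiting a single nilpotent cyclic submodule. Both arguments are equally valid and of comparable length.
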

\begin{proof}
(a) As $x \in (0:_MAnn_R(x))$, we have $V((0:_MAnn_R(x)))\subseteq V(x)$. Now let
$P$ be a minimal prime submodule of $M$ containing $x$. Then  there exists
 $a \in Ann_R(x) \setminus (P:_RM)$ by Lemma \ref{ll1.4}. Then, for any $y \in (0:_MAnn_R(x))$, we have $ay=0 \in P$. It follows that $y \in P$. Therefore, $(0:_MAnn_R(x))\subseteq P$. Thus $V(x)\subseteq V((0:_MAnn_R(x)))$.

(b) Let $J$ be an ideal of $R$ and $x \in M$. As $Ann_R(x) \subseteq Ann_R(Jx)$ and $Ann_R(JM) \subseteq Ann_R(Jx)$, we have
$$
(0:_MAnn_R(Jx))\subseteq (0:_MAnn_R(JM)) \cap (0:_MAnn_R(x)).
$$
Now let $z \in (0:_MAnn_R(JM)) \cap (0:_MAnn_R(x))$. Assume that $t \in Ann_R(Jx)$. Then $Jt \subseteq Ann_R(x)$ and so $tJz=0$. Since $M$ is a multiplication $R$-module $tzR=IM$ for some ideal $I$ of $R$. Hence, $I \subseteq Ann_R(JM)$ and so $Iz=0$. Thus $t(Rz)^2=0$. It follows that $(Rtz)^2=0$. This implies that $zt=0$ since $M$ is a reduced multiplication $R$-module. Therefore, $zAnn_R(Jx)=0$ and so $z \in (0:_MAnn_R(Jx))$.
\end{proof}

\begin{cor}\label{c1.143}
Let $M$ be a reduced multiplication $R$-module. Then for each $x \in M$, $(0:_MAnn_R(x))=\mathfrak{P}_{Rx}$.
\end{cor}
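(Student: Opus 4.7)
The plan is to combine Theorem \ref{t1.45}(a) with Theorem \ref{t1.12} via a short two-step deduction, since almost all of the real content already sits in those earlier results.

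First I would unpack the definition: by the conventions fixed in the introduction,
\[
\mathfrak{P}_{Rx} \;=\; \bigcap\{P \in Min^p(M) : Rx \subseteq P\} \;=\; \bigcap\{P \in Min^p(M) : x \in P\},
\]
so $\mathfrak{P}_{Rx}$ is precisely the intersection of the primes indexed by $V(x)$. (If $V(x) = \emptyset$ the statement is vacuous on the right, and on the left $(0 :_M Ann_R(x))$ is forced to equal $M$ by an easy argument using Theorem \ref{trt1.11}.)

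Next I would invoke Theorem \ref{t1.45}(a), which gives $V(x) = V\bigl((0:_M Ann_R(x))\bigr)$. Intersecting over these equal index sets of minimal primes yields
\[
\mathfrak{P}_{Rx} \;=\; \bigcap_{P \in V(x)} P \;=\; \bigcap_{P \in V((0:_M Ann_R(x)))} P \;=\; \mathfrak{P}_{(0:_M Ann_R(x))}.
\]
Finally, apply Theorem \ref{t1.12} to the ideal $I = Ann_R(x)$ to conclude $(0:_M Ann_R(x)) = \mathfrak{P}_{(0:_M Ann_R(x))}$, and chain the two equalities to obtain $(0:_M Ann_R(x)) = \mathfrak{P}_{Rx}$.

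There is no serious obstacle here; the proof is essentially a bookkeeping exercise once Theorems \ref{t1.12} and \ref{t1.45}(a) are in hand. The only point requiring a moment's attention is the (mild) case distinction when $V(x)$ is empty, which is handled by recalling the convention that $\mathfrak{P}_K = M$ when $K$ lies in no minimal prime submodule and by noting that in this reduced setting $V(x) = \emptyset$ forces $Ann_R(x)M = 0$ (from Theorem \ref{trt1.11}), hence $(0:_M Ann_R(x)) = M$ as well.
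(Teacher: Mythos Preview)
Your proposal is correct and matches the paper's approach exactly: the paper's proof consists of the single sentence ``This follows from Theorem \ref{t1.12} and Theorem \ref{t1.45} (a),'' and you have spelled out precisely the intended chain $\mathfrak{P}_{Rx} = \mathfrak{P}_{(0:_M Ann_R(x))} = (0:_M Ann_R(x))$. Your explicit treatment of the empty case $V(x)=\emptyset$ is a welcome clarification the paper omits.
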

\begin{proof}
This follows from Theorem \ref{t1.12} and Theorem \ref{t1.45} (a).
\end{proof}

\begin{thm}\label{t99.6}
Let $M$ be a reduced multiplication $R$-module. Then the following are equivalent:
\begin{itemize}
\item [(a)] For each $x,y \in M$, $\mathfrak{P}_x=\mathfrak{P}_y$ and $x\in  N$ imply that $y \in N$;
\item [(b)] For each $x,y \in M$, $V(Rx)=V(Ry)$ and $x\in N$ imply that $y \in N$;
\item [(c)] For each $x,y \in M$, $Ann_R(x)=Ann_R(y)$ and $x\in  N$ imply that $y \in N$.
\end{itemize}
\end{thm}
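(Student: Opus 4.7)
The plan is to reduce the whole statement to a pointwise equivalence: for every $x,y\in M$, the three conditions
\[
\mathfrak{P}_x=\mathfrak{P}_y,\qquad V(Rx)=V(Ry),\qquad Ann_R(x)=Ann_R(y)
\]
are pairwise equivalent. Once this is proved, the three hypotheses on $N$ appearing in (a), (b), (c) collapse to the same condition, and the equivalence of the three clauses of the theorem follows at once.

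Two of these pointwise equivalences are cheap. Because $V(Rx)=V(x)$ and $\mathfrak{P}_{Rx}=\mathfrak{P}_x$ directly from the definitions, and because $Rx\subseteq\mathfrak{P}_{Rx}$ (the intersection of all minimal primes containing $Rx$ must contain $Rx$), one sees quickly that $V(\mathfrak{P}_x)=V(Rx)$; hence $V(Rx)$ and $\mathfrak{P}_x$ determine one another, giving $\mathfrak{P}_x=\mathfrak{P}_y\Leftrightarrow V(Rx)=V(Ry)$. For the link with annihilators, Theorem \ref{trt1.11} gives $V(Ann_R(x)M)=Min^p(M)\setminus V(x)$, so $Ann_R(x)=Ann_R(y)$ instantly implies $Ann_R(x)M=Ann_R(y)M$ and therefore $V(x)=V(y)$, i.e.\ $V(Rx)=V(Ry)$.

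The main obstacle is the converse implication $V(x)=V(y)\Rightarrow Ann_R(x)=Ann_R(y)$, which is the one place where the reduced-ness of $M$ is essential. To handle it, I fix $r\in Ann_R(x)$ and prove $ry\in P$ for every $P\in Min^p(M)$, which by Corollary \ref{C1} forces $ry=0$. If $P\in V(y)=V(x)$, then $y\in P$ gives $ry\in P$ for free. Otherwise $P\in Min^p(M)\setminus V(x)=V(Ann_R(x)M)$ by Theorem \ref{trt1.11}, so $rM\subseteq P$ and again $ry\in P$. Hence $ry\in\bigcap_{P\in Min^p(M)}P=0$, so $r\in Ann_R(y)$; by symmetry $Ann_R(x)=Ann_R(y)$. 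This completes the pointwise equivalence, from which (a), (b), (c) at once coincide.
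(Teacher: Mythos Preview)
Your proof is correct. The overall architecture---reducing the equivalence of (a), (b), (c) to the pointwise equivalence of the three conditions $\mathfrak{P}_x=\mathfrak{P}_y$, $V(Rx)=V(Ry)$, $Ann_R(x)=Ann_R(y)$---is also what the paper does, though the paper phrases it as a cycle of implications rather than stating the pointwise equivalence up front. The easy directions coincide: both you and the paper use that $V(Rx)=V(Ry)$ and $\mathfrak{P}_x=\mathfrak{P}_y$ are interchangeable, and that $Ann_R(x)=Ann_R(y)$ forces $V(x)=V(y)$. The genuine difference lies in the hard direction. The paper proves $\mathfrak{P}_x=\mathfrak{P}_y\Rightarrow Ann_R(x)=Ann_R(y)$ by invoking Corollary~\ref{c1.143} (so that $\mathfrak{P}_{Rx}=(0:_MAnn_R(x))$) and then the identity $Ann_R(x)=Ann_R((0:_MAnn_R(x)))$; this route passes through Theorem~\ref{t1.12} and Theorem~\ref{t1.45}. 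You instead prove $V(x)=V(y)\Rightarrow Ann_R(x)=Ann_R(y)$ directly: for $r\in Ann_R(x)$ you show $ry$ lies in every minimal prime by splitting on whether the prime contains $y$ or not, using only Theorem~\ref{trt1.11} and Corollary~\ref{C1}. Your argument is more elementary in that it bypasses the machinery of Theorem~\ref{t1.12} entirely; the paper's route, on the other hand, leverages the explicit description $\mathfrak{P}_{Rx}=(0:_MAnn_R(x))$, which is of independent use later in the paper.
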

\begin{proof}
$(a)\Rightarrow (b)$
Let for $x,y \in M$, $V(Rx)=V(Ry)$ and $x \in N$. Then  $\mathfrak{P}_x=\mathfrak{P}_y$. Thus by part (a), $y \in N$.

$(b)\Rightarrow (c)$
Let for $x, y \in M$, $Ann_R(x)=Ann_R(y)$ and $x \in N$. Then $(0:_MAnn_R(x))=(0:_MAnn_R(y))$.  Thus by Theorem \ref{t1.45} (a), $V(Rx)=V(Ry)$. Hence by part (b), $y \in N$.

$(c)\Rightarrow (a)$
Let for $x, y \in M$, $\mathfrak{P}_{Rx}=\mathfrak{P}_{Ry}$ and $x \in N$. Then $(0:_MAnn_R(x))=(0:_MAnn_R(y))$ by Corollary \ref{c1.143}. Hence $$
Ann_R(x)=Ann_R(0:_MAnn_R(x))=Ann_R(0:_MAnn_R(y))=Ann_R(y).
$$
Thus by part (c), $y \in N$.
\end{proof}

\section{$z^\circ$-submodules}
\begin{defn}\label{d9.1}
We say that a proper submodule $N$ of an $R$-module $M$ is a \textit{$z^\circ$-submodule} of $M$ if  $\mathfrak{P}_x \subseteq N$  for all  $x \in N$.
\end{defn}

\begin{rem}\label{r9.1}
Let $M$ be an $R$-module.
If $N$ is a $z^\circ$-submodule of $M$, then for
each $x \in N$ we have  $\mathfrak{P}_x\not=M$, i.e. $x$ contained at least in a minimal prime submodule of $M$.   Clearly, every minimal prime submodule of $M$ is a $z^\circ$-submodule of $M$. Also the family of
$z^\circ$-submodules of $M$ is closed under intersection. Therefore, if  $\mathfrak{P}_0\not=M$, then $\mathfrak{P}_0$ is a $z^\circ$-submodule of $M$ and it is contained in every $z^\circ$-submodule of $M$.
\end{rem}

\begin{prop}\label{pp001.14}
Let $N$ be a $z^\circ$-submodule of a faithful multiplication $R$-module $M$. Then
for each $r \in R$, $(N:_Mr)$ is a $z^\circ$-submodule of $M$.
\end{prop}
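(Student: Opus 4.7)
The plan is to unravel the definition directly: pick $x \in (N:_M r)$ and chase $\mathfrak{P}_x$ into $(N:_M r)$ via the key intermediate inclusion $r\mathfrak{P}_x \subseteq \mathfrak{P}_{rx}$. Once this inclusion is in hand, the $z^\circ$-property of $N$ closes the argument immediately.

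In more detail, I would fix $r \in R$ and assume $(N:_M r)$ is proper (otherwise $rM \subseteq N$ and the conclusion is vacuous in the sense of Definition \ref{d9.1}). Pick any $x \in (N:_M r)$, so that $rx \in N$. Since $N$ is a $z^\circ$-submodule, Definition \ref{d9.1} together with Remark \ref{r9.1} yields $\mathfrak{P}_{rx} \subseteq N$ and $\mathfrak{P}_{rx} \ne M$. To establish $\mathfrak{P}_x \subseteq (N:_M r)$, I take an arbitrary $y \in \mathfrak{P}_x$ and verify that $ry \in N$.

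The main step is to show $ry \in \mathfrak{P}_{rx}$. Let $P$ be any minimal prime submodule of $M$ containing $rx$. Since $P$ is prime, the defining dichotomy forces either $x \in P$ or $r \in (P:_R M)$. In the first case, $y \in \mathfrak{P}_x \subseteq P$ already gives $ry \in P$; in the second, $rM \subseteq P$ gives $ry \in P$ directly. Intersecting over all minimal prime submodules containing $rx$ yields $ry \in \mathfrak{P}_{rx}$, hence $r\mathfrak{P}_x \subseteq \mathfrak{P}_{rx} \subseteq N$, which is precisely $\mathfrak{P}_x \subseteq (N:_M r)$. A brief remark I would include is that the same dichotomy shows no $x \in (N:_M r)$ can have $\mathfrak{P}_x = M$ once $(N:_M r)$ is proper, so the condition $\mathfrak{P}_x \ne M$ required by the definition is automatic.

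The only potential obstacle is the edge behaviour when $(N:_M r)$ fails to be proper, i.e.\ when $r \in (N:_R M)$; this case has to be set aside at the outset, but is harmless. The faithful-multiplication hypothesis does not enter the core chase explicitly; it serves to place the statement within the framework of the paper, where minimal prime submodules, $\mathfrak{P}_x$, and the ideal/submodule correspondence behave as the preceding section requires. The mechanical core of the proof is simply the primeness case-split on minimal primes containing $rx$.
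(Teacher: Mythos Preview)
Your proof is correct and follows essentially the same route as the paper: the paper's argument is exactly to observe $r\mathfrak{P}_x \subseteq \mathfrak{P}_{rx}$, then use $\mathfrak{P}_{rx} \subseteq N$ to conclude. The paper simply asserts the inclusion $r\mathfrak{P}_x \subseteq \mathfrak{P}_{rx}$ with the phrase ``one can see that,'' whereas you supply the prime-dichotomy argument explicitly; you also handle the properness of $(N:_M r)$ more carefully than the paper does, and your observation that the faithful-multiplication hypothesis is not actually used in the chase is accurate.
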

\begin{proof}
As $M$ is a faithful multiplication $R$-module, one can see that  $r\mathfrak{P}_{x}\subseteq \mathfrak{P}_{rx}$.
Suppose that $r \in R$ and $x\in (N:_Mr)$. Then $rx \in N$. So by assumption, $\mathfrak{P}_{rx} \subseteq N$. Since, $r\mathfrak{P}_{x}\subseteq \mathfrak{P}_{rx}$, we have $r\mathfrak{P}_{x} \subseteq N$. Thus $\mathfrak{P}_{x} \subseteq (N:_Mr)$.
\end{proof}

\begin{thm}\label{t991.11}
Let $M$ be a reduced multiplication $R$-module, $x , y \in M$, and $y \in Ann_R(x)M$.
If $Ann(x)M = \mathfrak{P}_{y}$, then $x+y \not \in T_0(M)$. The converse holds when $M$ is faithful and $Ann_R(x)M$ is a $z^\circ$-submodule of $M$.
\end{thm}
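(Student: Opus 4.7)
The plan is to prove both implications by exploiting the topological picture of $Min^p(M)$ established in Theorem \ref{trt1.11}. The single observation that drives both directions is the following: since $y\in \mathfrak{P}_y$ always holds, one has $V(y)=V(\mathfrak{P}_y)$ (the case $\mathfrak{P}_y=M$ forces $V(y)=\emptyset=V(\mathfrak{P}_y)$). Combining this with the hypothesis $Ann_R(x)M=\mathfrak{P}_y$ and Theorem \ref{trt1.11} yields
$V(y)=V(\mathfrak{P}_y)=V(Ann_R(x)M)=Min^p(M)\setminus V(x)$.
Thus $V(x)$ and $V(y)$ partition $Min^p(M)$: every minimal prime submodule of $M$ contains exactly one of $x$ and $y$.

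For the forward direction, I would suppose $r(x+y)=0$ for some $r\in R$ and fix an arbitrary $P\in Min^p(M)$. By the partition property, $x+y\notin P$: if, say, $x\in P$ and $y\notin P$, then $y=(x+y)-x$ forces $x+y\notin P$, and the symmetric case is analogous. But $r(x+y)=0\in P$, so primeness of $P$ forces $rM\subseteq P$. Intersecting over all $P\in Min^p(M)$ and invoking Corollary \ref{C1} give $rM\subseteq \mathfrak{P}_0=0$, so $r\in Ann_R(M)$. Under faithfulness (which is the intended setting, since otherwise $T_0(M)=M$ makes the stated conclusion vacuous) this gives $r=0$, so $x+y\notin T_0(M)$.

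For the converse, assume $M$ is faithful, $Ann_R(x)M$ is a $z^\circ$-submodule, and $x+y\notin T_0(M)$. The containment $\mathfrak{P}_y\subseteq Ann_R(x)M$ is immediate from Definition \ref{d9.1} applied to the element $y\in Ann_R(x)M$. For the reverse inclusion, I would show that every $P\in V(y)$ contains $Ann_R(x)M$: by Corollary \ref{c1.151} together with faithfulness, $x+y\notin T_0(M)$ means that $x+y$ lies in no minimal prime submodule, so $x$ and $y$ cannot simultaneously lie in a single $P\in Min^p(M)$; hence for $P\in V(y)$ one must have $x\notin P$, and Theorem \ref{trt1.11} then gives $Ann_R(x)M\subseteq P$. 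Intersecting over $P\in V(y)$ yields $Ann_R(x)M\subseteq \mathfrak{P}_y$, completing the argument.

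The main obstacle is really only conceptual: one has to notice that the equality $Ann_R(x)M=\mathfrak{P}_y$ is nothing but the statement that $V(x)$ and $V(y)$ partition $Min^p(M)$, once Theorem \ref{trt1.11} is applied. Once that observation is in place, the forward direction becomes a short case analysis powered by primeness, and the converse is a short argument combining Remark \ref{r9.1}, Theorem \ref{trt1.11}, and Corollary \ref{c1.151}.
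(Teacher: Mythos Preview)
Your proof is correct and follows essentially the same route as the paper: both directions hinge on Theorem~\ref{trt1.11}, first showing that $x+y$ lies in no minimal prime submodule via a case analysis on whether $x\in P$, and then, for the converse, combining Corollary~\ref{c1.151} with Theorem~\ref{trt1.11} and the $z^\circ$-hypothesis on $Ann_R(x)M$. You are in fact slightly more careful than the paper in making explicit the role of faithfulness in the forward direction, deducing $rM\subseteq\mathfrak{P}_0=0$ directly rather than appealing to Corollary~\ref{c1.151}.
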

\begin{proof}
First assume that $Ann(x)M = \mathfrak{P}_{y}$ and assume contrary that $x+y \in P$ for some minimal prime submodule $P$  of
$M$ and seek a contradiction. We consider two cases. First, let $x \in P$, then $y \in P$ implies that $\mathfrak{P}_{y}\subseteq P$, i.e., $Ann(x)M = \mathfrak{P}_{y} \subseteq P$, which is impossible by Theorem \ref{trt1.11}. Now let $x \not\in  P$, then we must have $y \not \in P$, i.e.,
$Ann(x)M=  \mathfrak{P}_{y} \not\subseteq P$, which is again impossible by Theorem \ref{trt1.11}. Conversely, if $x+y \not \in T_0(M)$, then $x+y \not \in P$ for each $P \in Min^p(M)$ by Corollary \ref{c1.151}. Let $P$ be a minima1
prime submodule of $M$ with $y \in P$, then $x+y \not \in P$ implies that $x \not \in P$, i.e., $Ann(x)M \subseteq P$ by Theorem \ref{trt1.11}. Hence $Ann(x)M \subseteq \mathfrak{P}_{y}$. The reverse inclusion follows from the fact that $Ann_R(x)M$ is a $z^\circ$-submodule of $M$.
\end{proof}

\begin{prop}\label{p9.1}
Let $N$ be a proper submodule of an $R$-module $M$. Then $N$ as an $R$-submodule is a $z^\circ$-submodule if and only if as an $R/Ann_R(M)$-submodule is a $z^\circ$-submodule.
\end{prop}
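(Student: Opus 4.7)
The plan is to observe that passing from $R$ to $\bar{R} := R/Ann_R(M)$ changes essentially nothing about the internal structure of $M$ that matters for the definition of a $z^\circ$-submodule. Specifically, I want to show that the collection of minimal prime submodules of $M$ is identical whether $M$ is regarded as an $R$-module or as an $\bar{R}$-module, which immediately yields the equivalence because Definition \ref{d9.1} is phrased purely in terms of $\mathfrak{P}_x$.

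First I would record that the scalar action of $R$ on $M$ factors through $\bar{R}$ via $\bar{r}\cdot m := rm$, which is well-defined precisely because $Ann_R(M)$ annihilates $M$. A subset $K \subseteq M$ is closed under the action of $R$ if and only if it is closed under the action of $\bar{R}$, so the $R$-submodule lattice and the $\bar{R}$-submodule lattice of $M$ coincide. In particular $N$ is a proper submodule in either interpretation.

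Next I would verify that a proper submodule $P$ of $M$ is $R$-prime if and only if it is $\bar{R}$-prime. Given $r \in R$ and $m \in M$ with $rm \in P$, the corresponding statement over $\bar{R}$ is that $\bar{r}\,m \in P$; these are the same element, and the conditions $m \in P$ or $rM \subseteq P$ correspond exactly to $m \in P$ or $\bar{r}M \subseteq P$, since $rM = \bar{r}M$ as subsets of $M$. Thus $\mathrm{Spec}(M)$ and hence $Min^p(M)$ are intrinsic and do not depend on whether $R$ or $\bar{R}$ is the acting ring. Consequently, for every $x \in M$, the submodule $\mathfrak{P}_x$ is the same set in either setting.

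Finally I would conclude: the condition ``$\mathfrak{P}_x \subseteq N$ for every $x \in N$'' is literally the same statement over $R$ and over $\bar{R}$, so $N$ is a $z^\circ$-submodule as an $R$-submodule if and only if it is a $z^\circ$-submodule as an $\bar{R}$-submodule. I don't anticipate any genuine obstacle; the only step requiring a little care is checking that $R$-primeness and $\bar{R}$-primeness agree, and this is immediate from $Ann_R(M) \subseteq (P:_R M)$ for every submodule $P$ of $M$.
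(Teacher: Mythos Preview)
Your argument is correct and is precisely the kind of verification the paper has in mind; the paper's own proof consists of the single phrase ``This is straightforward.'' You have simply supplied the details behind that phrase, namely that the $R$- and $R/Ann_R(M)$-submodule lattices of $M$ coincide and that primeness (hence $Min^p(M)$ and $\mathfrak{P}_x$) is unchanged under this passage.
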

\begin{proof}
This is straightforward.
\end{proof}

\begin{lem}\label{l9.61}
Let $M$ be a reduced multiplication $R$-module. Then the following are equivalent:
\begin{itemize}
\item [(a)] $N$ is a $z^\circ$-submodule of $M$;
\item [(b)] $x \in N$ implies that $(0:_MAnn_R(x)) \subseteq N$;
\item [(c)] $N=\sum_{x\in N} \mathfrak{P}_x$.
\end{itemize}
\end{lem}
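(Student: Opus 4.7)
The plan is to reduce everything to Corollary \ref{c1.143}, which in the reduced multiplication setting identifies $\mathfrak{P}_{Rx}$ with $(0:_M \mathrm{Ann}_R(x))$. The first step would be to observe the trivial fact that $\mathfrak{P}_x = \mathfrak{P}_{Rx}$: a submodule contains $x$ if and only if it contains $Rx$, so the two collections of minimal prime submodules being intersected are identical (and the convention $\mathfrak{P}_x = M = \mathfrak{P}_{Rx}$ matches in the empty case). Combined with Corollary \ref{c1.143}, this gives the identity $\mathfrak{P}_x = (0:_M \mathrm{Ann}_R(x))$ for every $x \in M$.

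Next, the implication (a)$\Leftrightarrow$(b) is immediate from this identity together with Definition \ref{d9.1}: $\mathfrak{P}_x \subseteq N$ for all $x \in N$ is literally the same condition as $(0:_M \mathrm{Ann}_R(x)) \subseteq N$ for all $x \in N$.

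For (a)$\Rightarrow$(c), I would argue as follows. Suppose $N$ is a $z^\circ$-submodule. By the definition, $\mathfrak{P}_x \subseteq N$ for each $x \in N$, so $\sum_{x\in N}\mathfrak{P}_x \subseteq N$. For the reverse inclusion, note that for each $x \in N$ we have $\mathfrak{P}_x \neq M$ (otherwise the inclusion $M \subseteq N$ would contradict properness of $N$), so $x$ lies in at least one minimal prime submodule and hence $x \in \mathfrak{P}_x$. This forces $N \subseteq \sum_{x\in N}\mathfrak{P}_x$, giving equality.

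For (c)$\Rightarrow$(a), if $N = \sum_{x\in N}\mathfrak{P}_x$ then for each individual $x \in N$ we immediately have $\mathfrak{P}_x \subseteq \sum_{x\in N}\mathfrak{P}_x = N$, which is exactly the defining condition of a $z^\circ$-submodule. There is no real obstacle here; the only point one must be slightly careful about is the convention for $\mathfrak{P}_x$ when $x$ is in no minimal prime submodule, but properness of $N$ (which is built into (a) and (c)) rules this pathology out automatically.
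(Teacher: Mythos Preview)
Your argument is correct and follows exactly the route the paper takes: the equivalence $(a)\Leftrightarrow(b)$ is deduced from Corollary~\ref{c1.143} (your explicit remark that $\mathfrak{P}_x=\mathfrak{P}_{Rx}$ fills in the only gap in that citation), and $(a)\Leftrightarrow(c)$ is dismissed by the paper as ``straightforward,'' which is precisely the short containment argument you spell out. The only minor imprecision is your claim that properness is ``built into (c)''---it is not literally part of the equation, but the paper is equally silent on this point, so your proof matches the paper's level of rigor.
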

\begin{proof}
$(a)\Leftrightarrow (b)$
This follows from Corollary \ref{c1.143}.

$(a)\Leftrightarrow (c)$
This is straightforward.
\end{proof}

\begin{prop}\label{ppp9.1}
Let $M$ be a reduced multiplication $R$-module.
Then we have the following.
\begin{itemize}
\item [(a)] If $N$ is a $z^\circ$-submodule of $M$, then each element of $N$ is torsion, i.e., $N \subseteq T_0(M)$.
\item [(b)] For each $x \in T_0(M)$, $(0:_MAnn_R(x))$ is a $z^\circ$-submodule of $M$ and so every submodule of $T_0(M)$ is contained in a sum of $z^\circ$-submodules of $M$.
\end{itemize}
\end{prop}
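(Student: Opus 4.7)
The plan is to split the proposition into its two components and exploit the characterization $(0:_M Ann_R(x)) = \mathfrak{P}_{Rx}$ supplied by Corollary \ref{c1.143}.

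For part (a), I would take an arbitrary $x \in N$ and use the $z^\circ$-condition to force $x$ into some minimal prime submodule. Since $N$ is proper and $\mathfrak{P}_x \subseteq N$, we have $\mathfrak{P}_x \neq M$; by the convention set in the introduction, this means $x$ actually lies in at least one minimal prime submodule $P$. Lemma \ref{ll1.4} then yields some $a \in Ann_R(x) \setminus (P:_R M)$. Because $0 \in (P:_R M)$ trivially, $a$ is a \emph{nonzero} annihilator of $x$, witnessing $x \in T_0(M)$.

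For the first assertion of part (b), I would rewrite $(0:_M Ann_R(x))$ via Corollary \ref{c1.143} as $\mathfrak{P}_{Rx}$, the intersection of all minimal prime submodules of $M$ containing $x$. Remark \ref{r9.1} tells us that every minimal prime submodule is a $z^\circ$-submodule and that the family of $z^\circ$-submodules is closed under intersection, so once we know the intersection is proper we are done. Since $x \in T_0(M)$ and (under the faithfulness hypothesis underpinning Corollary \ref{c1.151}) $T_0(M) = \bigcup_{P \in Min^p(M)} P$, at least one minimal prime contains $x$, so $\mathfrak{P}_{Rx}$ is a genuine intersection and is proper. Alternatively, a direct verification of the $z^\circ$-condition is available: if $y \in (0:_M Ann_R(x))$ then $Ann_R(x) \subseteq Ann_R(y)$, whence $\mathfrak{P}_y = (0:_M Ann_R(y)) \subseteq (0:_M Ann_R(x))$, again by Corollary \ref{c1.143}.

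For the concluding sum statement I would simply observe that $x \in (0:_M Ann_R(x))$ for every $x \in T_0(M)$ (since $Ann_R(x)\cdot x = 0$), so any submodule $L \subseteq T_0(M)$ satisfies $L \subseteq \sum_{x \in L}(0:_M Ann_R(x))$, exhibiting $L$ as contained in a sum of $z^\circ$-submodules. The main obstacle I foresee is verifying properness of $(0:_M Ann_R(x))$: this hinges on $x$ lying in at least one minimal prime submodule, which is clear via Corollary \ref{c1.151} when $M$ is faithful but would otherwise require a separate direct argument.
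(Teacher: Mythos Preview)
Your argument is correct, but for part (a) the paper takes a shorter route: invoking Lemma~\ref{l9.61} gives $(0:_M Ann_R(x)) \subseteq N$ directly, and since $N$ is proper this forces $Ann_R(x) \neq 0$ (otherwise $(0:_M 0) = M \subseteq N$), so $x \in T_0(M)$ without any detour through minimal primes or Lemma~\ref{ll1.4}. Your approach works but is more circuitous. For part (b), your ``alternative direct verification'' is exactly what the paper does: from $y \in (0:_M Ann_R(x))$ one reads off $Ann_R(x) \subseteq Ann_R(y)$, hence $(0:_M Ann_R(y)) \subseteq (0:_M Ann_R(x))$, which is the $z^\circ$-condition via Lemma~\ref{l9.61}; the paper does not pass through Remark~\ref{r9.1}. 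The paper handles properness in one line---``since $x \in T_0(M)$, $(0:_M Ann_R(x)) \neq M$''---which, as you correctly flag, tacitly requires faithfulness (so that $Ann_R(x)\neq 0$ genuinely prevents $Ann_R(x)M=0$); your caution here is warranted. The concluding sum statement is treated identically in both.
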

\begin{proof}
(a) Let $N$ be a $z^\circ$-submodule of $M$ and $x \in N$. By Lemma \ref{l9.61}, $(0:_MAnn_R(x)) \subseteq N$. This implies that $Ann_R(x)\not=0$ since $N$ is proper. Thus $x \in T_0(M)$.

(b) Since $x \in T_0(M)$, $(0:_MAnn_R(x))\not=M$. Let $ y \in (0:_MAnn_R(x))$. Then $yAnn_R(x)=0$. This implies that $Ann_R(x) \subseteq Ann_R(y)$ and so $(0:_MAnn_R(y)) \subseteq (0:_MAnn_R(x))$. Now let $N$ be a submodule of $T_0(M)$. Then
$$
N=\sum_{x \in N}Rx\subseteq \sum_{x \in N}(0:_MAnn_R(x)).
$$
\end{proof}

An $R$-module $M$ is said to be a \emph{comultiplication module} if for every submodule $N$ of $M$ there exists an ideal $I$ of $R$ such that $N=(0:_MI)$ equivalently, for each submodule $N$ of $M$, we have $N=(0:_MAnn_R(N))$  \cite{AF07}.
\begin{defn}\label{d9.1}
We say that an $R$-module $M$ satisfies \textit{Property $dac$} if for each $x \in M$, $(0:_MAnn_R(x))$ be a cyclic submodule of $M$.
\end{defn}
Clearly every comultiplication $R$-module satisfies the Property $dac$.

\begin{thm}\label{t9.6}
Let $M$ be a reduced multiplication $R$-module which satisfies the Property $dac$. Then the following are equivalent for a proper submodule $N$ of $M$:
\begin{itemize}
\item [(a)] For each $x,y \in M$, $\mathfrak{P}_x=\mathfrak{P}_y$ and $x\in  N$ imply that $y \in N$;
\item [(b)] For each $x,y \in M$, $V(Rx)=V(Ry)$ and $x\in N$ imply that $y \in N$;
\item [(c)] $x \in N$ implies that $(0:_MAnn_R(x)) \subseteq N$;
\item [(d)] For each $x,y \in M$, $Ann_R(x)=Ann_R(y)$ and $x\in  N$ imply that $y \in N$;
\item [(e)] $N$ is a $z^\circ$-submodule of $M$;
\item [(f)] $N=\sum_{x\in N} \mathfrak{P}_x$.
\end{itemize}
\end{thm}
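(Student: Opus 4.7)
The plan is to bootstrap from two partial equivalences already in hand and bridge them with one implication in each direction. Theorem \ref{t99.6} gives $(a)\Leftrightarrow(b)\Leftrightarrow(d)$ in any reduced multiplication $R$-module, and Lemma \ref{l9.61} gives $(c)\Leftrightarrow(e)\Leftrightarrow(f)$; neither of those results invokes Property $dac$. So it suffices to prove one passage from the second block to the first and one in the opposite direction. I will go via $(e)\Rightarrow(a)$ and $(d)\Rightarrow(c)$, since these seem the shortest routes.

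The implication $(e)\Rightarrow(a)$ is nearly a tautology and does not need Property $dac$. If $N$ is a $z^\circ$-submodule and $x\in N$, then $\mathfrak{P}_x\subseteq N$ and $\mathfrak{P}_x\neq M$ by Remark \ref{r9.1}. Whenever $\mathfrak{P}_y=\mathfrak{P}_x$, also $\mathfrak{P}_y\neq M$, hence $y\in\mathfrak{P}_y=\mathfrak{P}_x\subseteq N$.

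For $(d)\Rightarrow(c)$, I plan to use Property $dac$ to promote a single generator to the whole submodule $(0:_M Ann_R(x))$. Fix $x\in N$ and write $(0:_M Ann_R(x))=Rw$ by Property $dac$. The crux is the annihilator equality $Ann_R(w)=Ann_R(x)$: one inclusion holds because $w\in(0:_M Ann_R(x))$ is killed by $Ann_R(x)$; for the reverse, note that $x\in(0:_M Ann_R(x))=Rw$, so any $s\in R$ with $sw=0$ satisfies $s\cdot Rw=0$, and in particular $sx=0$. Hypothesis (d) applied to $x$ and $w$ then delivers $w\in N$, whence $Rw=(0:_M Ann_R(x))\subseteq N$, which is (c).

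The only subtle point, and really the only place where Property $dac$ enters, is the reduction of the statement $(0:_M Ann_R(x))\subseteq N$ to a statement about one generator $w$. Without cyclicity, condition (d) would produce only elements $y$ with $Ann_R(y)=Ann_R(x)$, not the full submodule $(0:_M Ann_R(x))$ whose arbitrary members typically have strictly larger annihilator; so the hypothesis is genuinely used at exactly this point. The remaining work is bookkeeping against the cited results.
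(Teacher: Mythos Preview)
Your proof is correct. The overall architecture matches the paper's: both recognize that Theorem~\ref{t99.6} already yields $(a)\Leftrightarrow(b)\Leftrightarrow(d)$ and Lemma~\ref{l9.61} yields $(c)\Leftrightarrow(e)\Leftrightarrow(f)$, so only two bridging implications are needed. The paper chooses $(b)\Rightarrow(c)$ and $(c)\Rightarrow(d)$, whereas you choose $(e)\Rightarrow(a)$ and $(d)\Rightarrow(c)$. Your $(e)\Rightarrow(a)$ is indeed trivial (in fact the appeal to Remark~\ref{r9.1} is unnecessary, since $y\in\mathfrak{P}_y$ holds automatically). The substantive difference is in how Property~$dac$ is used: the paper's $(b)\Rightarrow(c)$ invokes Theorem~\ref{t1.45}(a) to get $V(Rx)=V(Ry)$ for a cyclic generator $y$ of $(0:_MAnn_R(x))$, while your $(d)\Rightarrow(c)$ bypasses Theorem~\ref{t1.45} entirely by computing $Ann_R(w)=Ann_R(x)$ directly from $x\in Rw=(0:_MAnn_R(x))$. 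Your route is marginally more self-contained; the paper's route makes more visible the connection to the minimal-prime topology developed in Section~2.
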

\begin{proof}
$(b)\Rightarrow (c)$
Let $x \in N$. Then  $V(x)=V((0:_MAnn_R(x)))$  by Theorem \ref{t1.45}. As $M$ satisfies the Property $dac$, there exists $y \in M$ such that  $(0:_MAnn_R(x))=Ry$. Thus by part (b), $(0:_MAnn_R(x))=Ry \subseteq N$.

$(c)\Rightarrow (d)$
Let for $x, y \in M$, $Ann_R(x)=Ann_R(y)$ and $x \in N$. Then $(0:_MAnn_R(x))=(0:_MAnn_R(y))$. By part (c), $(0:_MAnn_R(x)) \subseteq N$. Thus $Ry \subseteq (0:_MAnn_R(y))\subseteq N$ and so $y \in N$.

Now the result follows from Theorem \ref{t99.6} and Lemma \ref{l9.61}.
\end{proof}

\begin{thm}\label{t7.67}
Let $M$ be a reduced multiplication $R$-module which satisfies the Property $dac$. Then the Jacobson radical $Rad_N(M)$ of $M$ is zero if and only if every $z^\circ$-submodule of $M$ of is a $z$-submodule of $M$.
\end{thm}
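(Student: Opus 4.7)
The plan is to leverage the equivalent formulations of $z^\circ$-submodule established in Theorem \ref{t9.6}. Under the standing hypotheses (reduced, multiplication, Property $dac$), condition (d) there reads: $N$ is a $z^\circ$-submodule iff for all $x,y \in M$, $Ann_R(x) = Ann_R(y)$ together with $x \in N$ imply $y \in N$. This forms the bridge between the $z^\circ$ condition, phrased via minimal prime submodules, and the $z$ condition, phrased via maximal submodules.

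For the forward direction, I assume $Rad_M(M) = 0$ and let $N$ be a $z^\circ$-submodule. Given $x \in N$ and $y \in M$ with $\mathcal{M}(x) = \mathcal{M}(y) \neq \emptyset$, I aim to show $Ann_R(x) = Ann_R(y)$ and then invoke Theorem \ref{t9.6}. For the inclusion $Ann_R(x) \subseteq Ann_R(y)$, fix $r \in Ann_R(x)$ and check that $ry$ lies in every maximal submodule $P$ of $M$: if $P \in \mathcal{M}(x) = \mathcal{M}(y)$ then $y \in P$, so $ry \in P$; if $P \notin \mathcal{M}(x)$ then $x \notin P$, and since a maximal submodule of a multiplication module is prime, $rx = 0 \in P$ together with $x \notin P$ forces $rM \subseteq P$, whence $ry \in P$. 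Thus $ry \in \bigcap_{P \in Max(M)} P = Rad_M(M) = 0$, and the reverse inclusion follows by symmetry.

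For the backward direction, I observe that since $M$ is reduced, Corollary \ref{C1} gives $\mathfrak{P}_0 = 0$, so the zero submodule is itself a $z^\circ$-submodule of $M$ and hence a $z$-submodule by hypothesis. Any $y \in Rad_M(M)$ lies in every maximal submodule, so $\mathcal{M}(y) = Max(M) = \mathcal{M}(0)$; provided $Max(M) \neq \emptyset$ (in the opposite case the convention in the introduction forces $Rad_M(M) = M$, a degenerate situation one presumably excludes), applying the $z$-submodule property of $0$ to the pair $(0,y)$ then forces $y = 0$, proving $Rad_M(M) = 0$.

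The main obstacle is not really deep: the load-bearing external fact is the standard result that every maximal submodule of a multiplication module is prime, without which one cannot convert $rx = 0$ at a maximal submodule $P$ into the containment $rM \subseteq P$ used in Case 2 of the forward direction. Once this is in hand, the remaining work is organizational, since Theorem \ref{t9.6} has already packaged the nontrivial equivalence between the $z^\circ$-condition and the annihilator criterion $Ann_R(x) = Ann_R(y)$.
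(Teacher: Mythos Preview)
Your proof is correct and follows essentially the same route as the paper. In the forward direction both arguments reduce to showing $Ann_R(x)=Ann_R(y)$ via the primeness of maximal submodules in a multiplication module and then invoke Theorem~\ref{t9.6}(d); your case split over all maximal submodules is just the direct form of the paper's contradiction argument. In the backward direction the paper applies the hypothesis to a minimal prime submodule $P$ avoiding $x$ (using $\mathfrak{P}_0=0$), while you apply it to the zero submodule itself---a slightly cleaner choice, since $\mathfrak{P}_0=0$ already makes $0$ a $z^\circ$-submodule directly, but the two arguments are otherwise identical.
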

\begin{proof}
$(\Rightarrow)$:  Let $N$ be a $z^\circ$-submodule of $M$. Assume that $\mathcal{M}(x)=\mathcal{M}(y)$ with $x \in N$. It is enough to show that $Ann_R(x)=Ann_R(y)$. To see this, let $a \in Ann_R(x)$ and assume contrary that $a \not \in Ann_R(y)$. Then $0\not=ay\not \in Rad_N(M)$. Thus there exists a maximal submodule $T$ of $M$ such that $a \not \in (T:_RM)$ and $ y \not \in T$. Now $ax=0 \in T$ follows that $x \in T$. Hence, $\mathcal{M}(x)=\mathcal{M}(y)$ implies that $ y \not \in T$, a contradiction.

$(\Leftarrow)$:
Assume contrary that  $0 \not = x \in Rad_N(M)$. Since $M$ is a reduced module there exists a minimal prime submodule $P$ of $M$ not containing $x$. As $\mathcal{M}(x)=\mathcal{M}(0)$ and $P$ is a $z^\circ$-submodule of $M$, we have $x \in P$ by assumption. This is a contradiction.
\end{proof}

\begin{thm}\label{t97.9}
Let $M$ be a reduced multiplication $R$-module which satisfies the Property $dac$ and $N$ be a $z^\circ$-submodule of $M$. Then every
minimal prime submodule over $N$ is a prime $z^\circ$-submodule of $M$. In particular, $rad(N)$ is a  $z^\circ$-submodule of $M$.
\end{thm}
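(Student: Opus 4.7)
The plan is to verify characterization (c) of Theorem \ref{t9.6} for $P$: for each $x \in P$, $(0:_M Ann_R(x)) \subseteq P$. Using Proposition \ref{p9.1}, I would first reduce to the faithful case by passing to $R/Ann_R(M)$, so that the standard bijection between submodules of a faithful multiplication module and ideals of $R$ applies. Under this bijection, $(P:_R M)$ becomes a minimal prime ideal of $R$ over $(N:_R M)$.

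The key step mirrors the classical argument for $z^\circ$-ideals. Given $x \in P$, I first produce $s \in R \setminus (P:_R M)$ with $sx \in N$. The standard characterization of minimal primes yields $s \notin (P:_R M)$ and $k \geq 1$ with $sa^k \in (N:_R M)$ for $a \in (Rx:_R M)$; the descent to the single element $x$ is where Property $dac$ enters, since $\mathfrak{P}_x = (0:_M Ann_R(x))$ is cyclic by Corollary \ref{c1.143} and the hypothesis, allowing me to pick a generator and bypass ideal-power manipulations. Once $sx \in N$ is in hand, Proposition \ref{pp001.14} gives $s\mathfrak{P}_x \subseteq \mathfrak{P}_{sx}$, and a direct check shows $\mathfrak{P}_{sx} = \mathfrak{P}_{sM} \cap \mathfrak{P}_x$, because a minimal prime submodule $Q$ contains $sx$ exactly when $sM \subseteq Q$ or $x \in Q$. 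Since $N$ is $z^\circ$, $\mathfrak{P}_{sx} \subseteq N \subseteq P$, so for any $y \in \mathfrak{P}_x$ we have $sy \in s\mathfrak{P}_x \subseteq \mathfrak{P}_{sx} \subseteq P$; primeness of $P$ together with $s \notin (P:_R M)$ then forces $y \in P$. Hence $(0:_M Ann_R(x)) = \mathfrak{P}_x \subseteq P$, as required.

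For the ``in particular'' assertion, $rad(N)$ equals the intersection of all minimal prime submodules over $N$ (since every prime over $N$ contains such a minimal one). The first part shows each of these is a $z^\circ$-submodule, and by Remark \ref{r9.1} the intersection of $z^\circ$-submodules is again $z^\circ$, hence so is $rad(N)$. The main obstacle I anticipate is precisely the passage from the ring-level relation $sa^k \in (N:_R M)$ back to the module-level relation $sx \in N$ for the specific element $x$: this is where Property $dac$ appears essential, since it lets one treat $\mathfrak{P}_x$ via a single cyclic generator and reduce the ideal-power step to a statement about $x$ itself.
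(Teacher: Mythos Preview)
Your overall architecture matches the paper's: find $s \notin (P:_R M)$ with $sx \in N$, then use the $z^\circ$-property of $N$ together with primeness of $P$ to pull $\mathfrak{P}_x$ (or, in the paper's version, any $y$ with $Ann_R(y) = Ann_R(x)$) into $P$. The divergence is precisely at the step you flag as the ``main obstacle'': producing that element $s$.

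The paper does not go through a ring-level minimal-prime characterization at all. It applies Lemma~\ref{ll1.4} directly to the quotient $M/N$: since $P/N$ is a minimal prime submodule of the multiplication module $M/N$, there is $c \in Ann_R(x+N) \setminus (P/N:_R M/N)$, which unpacks immediately to $cx \in N$ and $c \notin (P:_R M)$. Your proposed detour---take $a \in (Rx:_R M)$ and obtain $s a^k \in (N:_R M)$---does not yield $sx \in N$, and your claim that Property~$dac$ (cyclicity of $\mathfrak{P}_x$) bridges this gap is not substantiated: you never explain what the cyclic generator actually buys, and I do not see a mechanism by which it converts $sa^k M \subseteq N$ into $sx \in N$. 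This is a genuine gap.

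Your localisation of where Property~$dac$ enters is also off. In the paper's argument, $dac$ is used only at the end: having verified condition~(d) of Theorem~\ref{t9.6} for $P$ (the $Ann_R(x)=Ann_R(y)$ criterion), one invokes $dac$ to conclude that $P$ is a $z^\circ$-submodule. Your plan verifies condition~(c) instead, which by Lemma~\ref{l9.61} is equivalent to being a $z^\circ$-submodule \emph{without} $dac$. So if your production of $s$ really worked as sketched, you would have eliminated the $dac$ hypothesis altogether---a sign that this step is not as routine as hoped. The ``in particular'' clause is handled identically in both arguments, via Remark~\ref{r9.1}.
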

\begin{proof}
Let $P$ be a minimal prime submodule over $N$. Assume that $Ann(x)= Ann(y)$, where
$x \in P$ and $y \in M$. Since $P/N$ is a minimal prime submodule of $M/N$,
By Lemma \ref{ll1.4}, there exists $c \in Ann_R(x+N)) \setminus (P/N:_RM/N)$. Thus $cx \in N$ and $c \not \in (P:_RM)$. Clearly, $Ann(cx)= Ann(cy)$. As $N$ be a $z^\circ$-submodule of $M$, we have
$cy \in N\subseteq P$. As $c \not \in (P:_RM)$ and $P$ is a prime submodule, $y \in P$ as needed. The last assertion is clear.
\end{proof}

\begin{cor}\label{c97.10}
Let $M$ be a reduced multiplication $R$-module which satisfies the Property $dac$. Then we have the following.
\begin{itemize}
\item [(a)]
If $f: M\rightarrow M/N$ is the natural epimorphism, where $N$ is a $z^\circ$-submodule of $M$, then every $z^\circ$-submodule of $M/N$ contracts to a $z^\circ$-submodule of $M$.
\item [(b)]
A submodule $N$ of a $M$ is a $z^\circ$-submodule of $M$ if and only if
it is an intersection of prime $z^\circ$-submodules of $M$.
\item [(c)]
Every maximal $z^\circ$-submodule is a prime $z^\circ$-submodule.
\item [(d)] If $P$ is a prime submodule of $M$, then either $P$ is a $z^\circ$-submodule or contains a maximal  $z^\circ$-submodule which is a prime $z^\circ$-submodule.
\end{itemize}
\end{cor}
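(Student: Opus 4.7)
The plan is to prove the four parts sequentially, relying chiefly on Theorem \ref{t97.9} (every minimal prime over a $z^\circ$-submodule is a prime $z^\circ$-submodule) and Proposition \ref{pp001.14} (the colon $(N:_M r)$ inherits the $z^\circ$-property). Since Proposition \ref{pp001.14} requires faithfulness, I would reduce throughout to the faithful case via Proposition \ref{p9.1}, regarding $M$ as a faithful module over $R/\operatorname{Ann}_R(M)$. For part (a), let $L = f^{-1}(\bar N)$ and take $x \in L$; for any minimal prime $\bar Q$ of $M/N$ containing $\bar x$, the preimage $P := f^{-1}(\bar Q)$ is a minimal prime of $M$ over $N$, hence a prime $z^\circ$-submodule of $M$ by Theorem \ref{t97.9}. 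Since $x \in P$, the $z^\circ$-property yields $\mathfrak P_x \subseteq P = f^{-1}(\bar Q)$, and intersecting over all such $\bar Q$ gives $f(\mathfrak P_x) \subseteq \mathfrak P_{\bar x}^{M/N} \subseteq \bar N$, i.e.\ $\mathfrak P_x \subseteq L$.

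For part (b), the direction $(\Leftarrow)$ is immediate from Remark \ref{r9.1}. For $(\Rightarrow)$, the key step is to show that the ideal $I := (N :_R M)$ is a $z^\circ$-ideal of $R/\operatorname{Ann}_R(M)$. Take $a \in I$ and $b \in \mathfrak P_a^R$; for any $m \in M$, $am \in N$ gives $\mathfrak P_{am}^M \subseteq N$ by the $z^\circ$-property of $N$, and I would verify $bm \in \mathfrak P_{am}^M$ by inspecting each minimal prime $P$ of $M$ containing $am$: either $m \in P$ (so $bm \in P$), or $a \in (P :_R M)$, in which case $(P :_R M)$ is a minimal prime of the base ring over $a$, hence contains $b$, yielding $bm \in P$. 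Consequently $bM \subseteq N$, proving $I$ is a $z^\circ$-ideal, which is automatically radical (an elementary consequence of $\mathfrak P_a^R = \mathfrak P_{a^n}^R$). Since $M$ is multiplication, $N = IM = \sqrt{I}\,M = \mathrm{rad}(N)$, and by Theorem \ref{t97.9} this equals the intersection of the prime $z^\circ$-submodules that are minimal primes of $M$ over $N$. This ring-module passage, namely extracting the ideal-level $z^\circ$ property from the module-level one to force radicality, is the main obstacle I anticipate.

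Parts (c) and (d) both hinge on the colon trick. For (c), a maximal $z^\circ$-submodule $P$ that fails primeness admits a witness $rm \in P$ with $r \notin (P :_R M)$ and $m \notin P$; then $(P :_M r)$ is $z^\circ$ by Proposition \ref{pp001.14}, is proper (since $(P :_M r) = M$ would force $r \in (P :_R M)$), and strictly contains $P$ through $m$, contradicting maximality. For (d), if $P$ is prime and not $z^\circ$, apply Zorn's Lemma to the proper $z^\circ$-submodules of $M$ contained in $P$ (nonempty since it contains $0 = \mathfrak P_0$ by Corollary \ref{C1}, and closed under chain unions which preserve the $z^\circ$-property) to obtain a maximal element $Q$. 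A second Zorn argument on the prime submodules of $M$ between $Q$ and $P$ (chains of primes have prime intersection) produces a minimal prime $\bar Q$ of $M$ over $Q$ with $Q \subseteq \bar Q \subseteq P$; by Theorem \ref{t97.9}, $\bar Q$ is a prime $z^\circ$-submodule, and being contained in $P$ it must equal $Q$ by maximality. Hence $Q$ is itself a prime $z^\circ$-submodule, maximal among $z^\circ$-submodules inside $P$, giving (d).
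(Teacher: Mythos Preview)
The paper states Corollary~\ref{c97.10} without proof, so there is no ``paper's own proof'' to compare against; the intended justification is presumably that each part follows from Theorem~\ref{t97.9} and the surrounding results. Your arguments for parts (a), (c) and (d) are sound and are essentially the natural deductions from Theorem~\ref{t97.9}, Proposition~\ref{pp001.14} (after passing to $R/\mathrm{Ann}_R(M)$ via Proposition~\ref{p9.1}), and Zorn's lemma.

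The gap is in part (b). Your argument hinges on the claim that if $P\in\mathrm{Min}^p(M)$ and $a\in(P:_RM)$, then $(P:_RM)$ is a \emph{minimal} prime ideal of the base ring $\bar R=R/\mathrm{Ann}_R(M)$. You do not justify this, and it is exactly the kind of prime-correspondence statement that the paper itself only invokes under a finite-generation hypothesis (see the use of \cite[Proposition~1.5]{MR2378535} in Proposition~\ref{p09.1}). For a general faithful multiplication module one knows that $(P:_{\bar R}M)$ is prime, and that $\bigcap_{P\in\mathrm{Min}^p(M)}(P:_{\bar R}M)=0$ in $\bar R$; but neither of these forces each $(P:_{\bar R}M)$ to be minimal. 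Without minimality you cannot conclude $b\in(P:_RM)$ from $b\in\mathfrak P_a^{\bar R}$, since $\mathfrak P_a^{\bar R}$ is by definition the intersection only over \emph{minimal} primes of $\bar R$ containing $a$, and an arbitrary prime containing $a$ need not contain such a minimal prime that also contains $a$.

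A cleaner route, and likely the one intended, is to bypass the ideal-level detour and argue directly in $M$. By Theorem~\ref{t97.9} every minimal prime over $N$ is a prime $z^\circ$-submodule, so it suffices to show $N=\mathrm{rad}(N)$. For this you can use the characterisation of the radical in multiplication modules (cf.\ \cite{MR1981026}): $x\in\mathrm{rad}(N)$ means $(Rx)^k\subseteq N$ for some $k$, and then Corollary~\ref{c1.143} together with the equivalence (d) of Theorem~\ref{t9.6} (which is available under Property~$dac$) lets you pull $x$ itself into $N$, since $\mathrm{Ann}_R(x)=\mathrm{Ann}_R((Rx)^k\cdot\text{generator})$ can be compared via the reducedness of $M$. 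You should make this step explicit rather than routing through an unproved ring-level minimality.
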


\begin{prop}\label{p97.13}
Let $M$ be a reduced multiplication $R$-module which satisfies the Property $dac$. Let $N_i$ for $1\leq i\leq n$ be proper submodule of $M$ such that for each $i\not=j$, $(N_i:_RM)$ and $(N_j:_RM)$ be co-prime ideals of $R$. Then $\bigcap^n_{i=1}N_i$ is a $z^\circ$-submodule of $M$ if and only if each
$N_j$ for $1\leq j\leq n$  is a $z^\circ$-submodule of $M$.
\end{prop}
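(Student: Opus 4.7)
My plan splits the equivalence into its two directions, with all the content living in the forward implication. The reverse implication is immediate from Remark \ref{r9.1}: the family of $z^\circ$-submodules is closed under intersection, and since each $N_j$ is proper the intersection $\bigcap_{i=1}^n N_i \subseteq N_1$ is proper as well, so it qualifies as a $z^\circ$-submodule.

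For the forward implication, my first step is to convert the pairwise coprimeness hypothesis into an explicit element of $R$. For each fixed $j$, I would produce $c_j \in R$ with $c_j \in \bigcap_{i \neq j}(N_i:_R M)$ and $1 - c_j \in (N_j:_R M)$. This is a standard Chinese-remainder construction: writing $1 = s_i + t_i$ with $s_i \in (N_i:_R M)$ and $t_i \in (N_j:_R M)$ for each $i \neq j$, the product $c_j := \prod_{i \neq j} s_i$ lies in each $(N_i:_R M)$ for $i \neq j$, while expanding $\prod_{i \neq j}(1-t_i)$ shows $1 - c_j \in (N_j:_R M)$.

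With $c_j$ available, I would work through the annihilator characterization from Theorem \ref{t9.6}(d), which applies because $M$ satisfies Property $dac$. Take $x \in N_j$ and $y \in M$ with $Ann_R(x) = Ann_R(y)$; the goal is $y \in N_j$. The element $c_jx$ lies in every $N_i$: for $i \neq j$ because $c_j \in (N_i:_R M)$, and for $i = j$ because $c_jx = x - (1-c_j)x$ with $x \in N_j$ and $(1-c_j)x \in (N_j:_R M)M \subseteq N_j$. A one-line check gives $Ann_R(c_jx) = Ann_R(c_jy)$, so the hypothesis that $\bigcap_i N_i$ is a $z^\circ$-submodule, combined again with Theorem \ref{t9.6}(d), forces $c_jy \in \bigcap_i N_i \subseteq N_j$. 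Then $y = c_jy + (1-c_j)y$ with $(1-c_j)y \in N_j$ yields $y \in N_j$, so $N_j$ satisfies condition (d) of Theorem \ref{t9.6} and is a $z^\circ$-submodule.

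The main obstacle, and the reason for routing through (d) rather than the definition, is that the direct attempt fails: to conclude $\mathfrak{P}_x \subseteq N_j$ for $x \in N_j$ one would like to go through $\mathfrak{P}_{c_jx} \subseteq \bigcap_i N_i \subseteq N_j$, but because $Ann_R(x) \subseteq Ann_R(c_jx)$ the resulting inclusion $\mathfrak{P}_{c_jx} \subseteq \mathfrak{P}_x$ runs the wrong way. Switching to the symmetric annihilator formulation makes the multiplication by $c_j$ harmless on both sides of the implication, and this is precisely the point at which Property $dac$ is indispensable.
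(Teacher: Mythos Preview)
Your proof is correct and follows essentially the same approach as the paper: both arguments verify condition (d) of Theorem \ref{t9.6} by producing an element $a \in \bigcap_{i\neq j}(N_i:_RM)$ with $1-a \in (N_j:_RM)$, observing that $ax \in \bigcap_i N_i$ with $Ann_R(ax)=Ann_R(ay)$, and then decomposing $y = ay + (1-a)y$. The only cosmetic difference is that the paper invokes directly that $\bigcap_{i\neq j}(N_i:_RM)$ and $(N_j:_RM)$ are coprime, whereas you spell out the product construction for the element $c_j$.
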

\begin{proof}
Assume that
$\bigcap^n_{i=1}N_i$ is a $z^\circ$-submodule of $M$ and $1\leq j\leq n$. We show that $N_j$ is a $z^\circ$-submodule of $M$. So let $Ann_R(x)=Ann_R(y)$ for some $x \in N_j$ and $y \in M$. As for each $i\not=j$, $(N_i:_RM)$ and $(N_j:_RM)$ are co-prime ideals of $R$, we have $\bigcap^n_{i=1, i \not=j}(N_i:_RM)$ and $(N_j:_RM)$ are co-prime ideals of $R$. Therefore $1=a+b$ for some $a \in \bigcap^n_{i=1, i \not=j}(N_i:_RM)$ and $b \in (N_j:_RM)$. Thus $y=ay+by$
and $Ann_R(ax)=Ann_R(ay)$. Since $M$ is a multiplication $R$-module, $ax \in \bigcap^n_{i=1}N_i$. Thus $\bigcap^n_{i=1}N_i$ is a $z^\circ$-submodule of $M$ implies that $ay \in \bigcap^n_{i=1}N_i \subseteq N_j$. Now since $by \in N_j$, $y \in N_j$ and we are done. The converse is clear.
\end{proof}

Let $M$ be an $R$-module. The set of zero divisors of $R$
on $M$ is $Zd_R(M)= \{r \in R| rm=0 \ for \ some\ nonzero\ m \in M\}$.
\begin{prop}\label{p09.1}
Let $M$ be a faithful finitely generated multiplication $R$-module. If $N$ is a $z^\circ$-submodule of $ M$. Then $(N:_RM) \subseteq Zd_R(M)$.
\end{prop}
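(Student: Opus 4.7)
The plan is proof by contradiction: take $r \in (N :_R M)$ with $r \notin Zd_R(M)$, and derive $M \subseteq N$, contradicting that $N$ is proper. Because $M$ is faithful multiplication, one first checks that $M$-regularity and $R$-regularity coincide for elements of $R$: if $r$ is $M$-regular and $tr = 0$ in $R$, then $r(tm) = 0$ for every $m \in M$ gives $tm = 0$, so $tM = 0$, and $t = 0$ by faithfulness. Thus the assumed $r$ is a non-zero-divisor of $R$. The classical localization argument (the image of a minimal prime ideal in its own localization is the nilradical, so any of its elements becomes a zero-divisor back in $R$) then shows that $r$ lies in no minimal prime ideal of $R$.

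Next I would invoke the standard correspondence for faithful finitely generated multiplication modules: every $P \in Min^p(M)$ has the form $\mathfrak{p} M$ for some minimal prime $\mathfrak{p}$ of $R$, with $(P :_R M) = \mathfrak{p}$. This rests on the cancellation identity $(IM :_R M) = I$ for such $M$, together with the fact that $\mathfrak{q} M$ is a prime submodule for every proper prime $\mathfrak{q}$ of $R$. Consequently $r \notin (P :_R M)$ for every $P \in Min^p(M)$.

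The $z^\circ$-hypothesis then closes the argument. Since each $P \in Min^p(M)$ is prime and $r \notin (P :_R M)$, for any $m \in M$ the relation $rm \in P$ forces $m \in P$. Combined with the trivial converse, this gives $V(m) = V(rm)$ inside $Min^p(M)$ for every $m$, whence $\mathfrak{P}_m = \mathfrak{P}_{rm}$ (including the degenerate case $V(m) = \emptyset$, in which both equal $M$ by convention). But $rm \in N$, so the $z^\circ$-condition forces $\mathfrak{P}_{rm} \subseteq N$, giving $m \in \mathfrak{P}_m = \mathfrak{P}_{rm} \subseteq N$. As $m$ was arbitrary, $M \subseteq N$, the desired contradiction. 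The step I expect to require the most care is the correspondence between minimal prime submodules of $M$ and minimal prime ideals of $R$, since it is there that all three hypotheses (faithful, finitely generated, multiplication) really enter; the remainder is a short chase through the definitions of $z^\circ$-submodule and prime submodule.
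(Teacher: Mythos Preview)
Your argument is correct. Both your proof and the paper's rely on the same two ingredients—the bijection between $Min^p(M)$ and the minimal primes of $R$ for a faithful finitely generated multiplication module, and the fact that every element of a minimal prime ideal is a zero-divisor—but they assemble them in opposite directions. The paper argues \emph{directly}: it fixes a generating set $M=\langle x_1,\dots,x_t\rangle$, uses the $z^\circ$-condition on each $rx_i$ to produce a minimal prime submodule $P_i\ni rx_i$, pushes $rI_i$ (where $Rx_i=I_iM$) into the minimal prime ideal $(P_i:_RM)\subseteq Zd_R(M)$, and then invokes Smith's identity $(Zd_R(M)M:_RM)=Zd_R(M)$ to conclude $r\in Zd_R(M)$. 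You instead argue by contradiction: assuming $r$ is $M$-regular you pass to $R$-regularity via faithfulness, deduce $r\notin(P:_RM)$ for every $P\in Min^p(M)$, and then use primality of each $P$ to obtain $V(m)=V(rm)$ and hence $\mathfrak{P}_m=\mathfrak{P}_{rm}\subseteq N$ for every $m\in M$. Your route has the advantage of bypassing the auxiliary result $(Zd_R(M)M:_RM)=Zd_R(M)$ and of never explicitly invoking a finite generating set (finite generation enters only through the prime correspondence, exactly where you flagged it); the paper's route is more constructive in that it exhibits, for each generator, the specific minimal prime witnessing that $r$ is a zero-divisor.
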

\begin{proof}
Let $r \in (N:_RM)$ and $M=\langle x_1,x_2,...,x_t\rangle$. Then $rx_i \in N$ for $i=1,2,...,t$. Thus $\mathfrak{P}_{rx_i} \subseteq N$ since $N$ is a $z^\circ$-submodule of $M$. This implies that for each $x_i$ there exists a minimal prime submodule $P_i$ of $M$ such that $rx_i \in P$ for $i=1,2,...,t$. As $M$ is a multiplication $R$-module, $Rx_i=I_iM$ for some ideal $I_i$ of $R$. By \cite[Proposition 1.5]{MR2378535}, $(P_i:_RM)$ is a minimal prime ideal of $R$. Thus by \cite[Theorem 84]{MR0345945}, $(P_i:_RM)  \subseteq Zd_R(M)$. Therefore, $rI_i \subseteq Zd_R(M)$ and so $rI_i M\subseteq Zd_R(M)M$ for $i=1,2,...,t$. This in turn implies that $r \in (Zd_R(M)M:_RM)$.
By \cite[Theorem 10]{MR933916},  $(Zd_R(M)M:_RM)=Zd_R(M)$. Thus $r \in Zd_R(M)$. Therefore, $(N:_RM) \subseteq Zd_R(M)$.
\end{proof}

An $R$-module $M$ satisfies \textit{Property $\mathcal{T}$} if for every
finitely generated submodule $N$ of $M$ with $N \subseteq T_0(M)$, there exists a nonzero  $r \in R$ with $rN = 0$, or equivalently
$Ann_R(N) \not=0$ \cite{MR3661610}.

\begin{thm}\label{t97.10}
Let $M$ be a reduced multiplication $R$-module satisfying Property $\mathcal{T}$ and $N$ be a submodule of $T_0(M)\not=M$, then $N$ is contained in a $z^\circ$-submodule.
\end{thm}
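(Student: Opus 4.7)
The plan is to construct an explicit $z^\circ$-submodule containing $N$ as a directed union of submodules of the form appearing in Proposition \ref{ppp9.1}(b). Specifically, define
$$K = \bigcup \bigl\{(0:_MAnn_R(F)) : F \text{ is a finitely generated submodule of } N\bigr\}.$$

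First I would verify that this is a submodule: for finitely generated $F_1, F_2 \subseteq N$, the sum $F_1+F_2$ is finitely generated, contained in $N$, and satisfies $Ann_R(F_1+F_2) = Ann_R(F_1) \cap Ann_R(F_2) \subseteq Ann_R(F_i)$, whence $(0:_MAnn_R(F_i)) \subseteq (0:_MAnn_R(F_1+F_2))$. So the family is directed under inclusion and $K$ is a submodule. Clearly $N \subseteq K$, since each $x \in N$ lies in $(0:_MAnn_R(Rx))$.

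The decisive step is to show $K$ is proper, and this is exactly where Property $\mathcal{T}$ enters. For any $y \in K$, pick a finitely generated $F \subseteq N$ with $y \in (0:_MAnn_R(F))$; then $Ann_R(F) \subseteq Ann_R(y)$. Since $F$ is a finitely generated submodule of $T_0(M)$, Property $\mathcal{T}$ forces $Ann_R(F) \neq 0$, so some nonzero element of $R$ annihilates $y$ and $y \in T_0(M)$. Therefore $K \subseteq T_0(M) \neq M$, so $K$ is proper.

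Finally, $K$ is a $z^\circ$-submodule by the characterization in Lemma \ref{l9.61}(b): for $y \in K$ with witness $F$, one has $(0:_MAnn_R(y)) \subseteq (0:_MAnn_R(F)) \subseteq K$. The main obstacle I anticipate is precisely this properness step; without Property $\mathcal{T}$ the sum of the individual $z^\circ$-submodules $(0:_MAnn_R(x))$ for $x \in N$ could a priori exhaust $M$, and Property $\mathcal{T}$ applied to finite subsets of $N$ is exactly what prevents this.
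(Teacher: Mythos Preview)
Your proof is correct, but it follows a genuinely different path from the paper's argument. The paper builds an ascending transfinite chain $N_0=N$, $N_{\beta+1}=\sum_{x\in N_\beta}(0:_MAnn_R(x))$, $N_\lambda=\bigcup_{\beta<\lambda}N_\beta$, argues that it stabilizes at some $N_\alpha$, and then shows by transfinite induction that every $N_\beta\subseteq T_0(M)$ (the successor step being exactly the place where Property~$\mathcal{T}$ is invoked on the finitely many summands of a typical element). Your construction sidesteps the iteration entirely: by indexing over \emph{finitely generated} $F\subseteq N$ rather than over single elements, you arrange that the union $K=\bigcup_F(0:_MAnn_R(F))$ is already closed under $y\mapsto (0:_MAnn_R(y))$, so Lemma~\ref{l9.61}(b) applies immediately with no further closure needed. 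This is more elementary---no ordinals---and arguably matches the shape of Property~$\mathcal{T}$ more naturally, since that property is phrased for finitely generated submodules in the first place. The paper's chain construction, on the other hand, makes the passage to Corollary~\ref{c97.12} (existence of a \emph{smallest} $z^\circ$-submodule over $N$) slightly more suggestive, though in fact minimality follows in either approach just from closure of $z^\circ$-submodules under intersection (Remark~\ref{r9.1}).
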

\begin{proof}
We define $N_0= N$, $N_1 =\sum_{x \in N_0}(0:_MAnn_R(x))$ and if $\alpha$ is a limit ordinal
we define $N_{\alpha} = \cup_{\beta <\alpha}N_{\beta}$, where $\beta$ is an ordinal, and finally if $\alpha= \beta + 1$,
we define $N_{\alpha} = \sum_{x \in N_\beta}(0:_MAnn_R(x))$. Then we have the following ascending chain
$$
N_0\subseteq N_1\subseteq ...\subseteq N_{\alpha}\subseteq N_{\alpha+1}\subseteq ....
$$
Since $M$ is a set, there exists the smallest
ordinal $\alpha$ such that $N_{\alpha}=N_{\gamma}$, for all $\gamma \geq \alpha$.  We claim that $N_{\alpha}$ is a proper submodule
which is also a $z^\circ$-submodule. If $N_{\alpha}$ is a proper submodule, it is certainly a  $z^\circ$-submodule,
for $N_{\alpha}=N_{\alpha+1}=\sum_{x \in N_{\alpha}}(0:_MAnn_R(x))$. This implies that $(0:_MAnn_R(x))\subseteq N_{\alpha}$ for each $x \in N_{\alpha}$, and therefore by Lemma \ref{l9.61}, we are through. Thus it
remains to be shown that $N_{\alpha}$, is a proper submodule. To see this, it suffices to
show that $N_{\alpha}\subseteq T_0(M)$ each $\alpha$. We proceed by
transfinite induction on $\alpha$. For $\alpha=0$, it is evident. Let us assume it true
for all ordinals $\beta < \alpha$ and prove it for $\alpha$. If $\alpha$ is a limit ordinal, then
 $N_{\alpha} = \cup_{\beta <\alpha}N_{\beta}$ and therefore $N_{\alpha}\subseteq T_0(M)$. Now let $\alpha = \beta + 1$
be a non limit ordinal, then $N_{\alpha} = \sum_{x \in N_\beta}(0:_MAnn_R(x))$. We must show that
each element $y$ of $N_{\alpha}$, is a torsion. Put $y = y_l + y_2 + . . . + y_n$, where
$y_i \in (0:_MAnn_R(x_i))$, $x_i \in N_{\beta}$, $i = 1,2, . . . , n$. But by the induction hypothesis $N_{\beta}\subseteq T_0(M)$. Now by the Property $\mathcal{T}$, there exists $0\not=b \in Ann_R(Ry_1+Ry_2+ . . +Ry_n)$, i.e., $by = 0$.
\end{proof}

\begin{cor}\label{c97.11}
If $M$ is a reduced multiplication $R$-module satisfying Property $\mathcal{T}$, then every
maximal submodule of $M$ that contained in $T_0(M)\not=M$ is a $z^\circ$-submodule.
\end{cor}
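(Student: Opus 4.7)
The plan is to reduce this corollary directly to Theorem \ref{t97.10} and then use the maximality hypothesis to collapse the containment. Let $N$ be a maximal submodule of $M$ with $N \subseteq T_0(M) \neq M$. Since the hypotheses of Theorem \ref{t97.10} are exactly that $M$ is a reduced multiplication $R$-module satisfying Property $\mathcal{T}$ and that $N$ is a submodule of $T_0(M) \neq M$, the theorem applies and produces a $z^\circ$-submodule $N'$ of $M$ with $N \subseteq N'$.

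Now I would invoke the fact that, by Definition \ref{d9.1} (the definition of $z^\circ$-submodule), any $z^\circ$-submodule is by convention a \emph{proper} submodule of $M$, so $N' \subsetneq M$. Combining this with the chain $N \subseteq N' \subsetneq M$ and the maximality of $N$ as a submodule of $M$, we must have $N = N'$. Hence $N$ itself is a $z^\circ$-submodule of $M$, which is the claim.

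There is really no obstacle here beyond applying the previous theorem correctly: the entire content of the corollary is that when the enveloping $z^\circ$-submodule produced by Theorem \ref{t97.10} cannot grow past $N$ (because $N$ is already maximal), then $N$ itself must be a $z^\circ$-submodule. I would keep the proof to essentially one or two sentences, pointing to Theorem \ref{t97.10} for existence of the enveloping $z^\circ$-submodule and to maximality of $N$ together with properness of any $z^\circ$-submodule for the equality $N=N'$.
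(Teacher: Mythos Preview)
Your proposal is correct and matches the paper's intended approach: the corollary is stated immediately after Theorem~\ref{t97.10} with no separate proof, so it is meant to follow exactly by taking the $z^\circ$-submodule $N'\supseteq N$ produced there, noting that $N'$ is proper by Definition~\ref{d9.1}, and using maximality of $N$ to force $N=N'$.
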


\begin{cor}\label{c97.12}
If $M$ is a reduced multiplication $R$-module satisfying Property $\mathcal{T}$, and $N \subseteq T_0(M)\not=M$ is a submodule of $M$, then there is the smallest $z^\circ$-submodule containing
$N$ and also there is a maximal $z^\circ$-submodule containing $N$ which is also a prime
$z^\circ$-submodule.
\end{cor}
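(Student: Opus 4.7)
The plan is to deduce this from the material already developed: Theorem \ref{t97.10} guarantees that the collection of $z^\circ$-submodules containing $N$ is nonempty, Remark \ref{r9.1} says this collection is closed under intersection, and Proposition \ref{ppp9.1}(a) forces every $z^\circ$-submodule of $M$ to lie inside $T_0(M)\neq M$, which is the ingredient that lets us run Zorn's lemma on the ascending side.

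For the first assertion, I would let $\mathcal{F}$ denote the family of all $z^\circ$-submodules of $M$ containing $N$. By Theorem \ref{t97.10}, $\mathcal{F}\neq\emptyset$. Then I would set $N^\ast=\bigcap_{L\in\mathcal{F}}L$. Clearly $N\subseteq N^\ast$, and $N^\ast$ is proper because every $L\in\mathcal{F}$ lies in $T_0(M)\neq M$ by Proposition \ref{ppp9.1}(a). By the closure-under-intersection remark in Remark \ref{r9.1}, $N^\ast$ is itself a $z^\circ$-submodule, hence the smallest element of $\mathcal{F}$.

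For the second assertion, I would order $\mathcal{F}$ by inclusion and verify the Zorn hypothesis. Let $\{L_\alpha\}$ be a chain in $\mathcal{F}$ and set $L=\bigcup_\alpha L_\alpha$. This union is a submodule (chain), is contained in $T_0(M)\neq M$ and is therefore proper, contains $N$, and satisfies the $z^\circ$-condition: if $x\in L$, then $x\in L_\alpha$ for some $\alpha$, and since $L_\alpha$ is a $z^\circ$-submodule, $\mathfrak{P}_x\subseteq L_\alpha\subseteq L$. Hence $L\in\mathcal{F}$ is an upper bound for the chain, and Zorn's lemma provides a maximal element $Q$ of $\mathcal{F}$. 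By Corollary \ref{c97.10}(c), every maximal $z^\circ$-submodule is a prime $z^\circ$-submodule, so $Q$ is a prime $z^\circ$-submodule of $M$ containing $N$.

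The only nonroutine point is realizing that properness of the candidates is automatic once one invokes Proposition \ref{ppp9.1}(a) together with the hypothesis $T_0(M)\neq M$; without that, a chain of proper $z^\circ$-submodules could in principle exhaust $M$, and Zorn would fail. Everything else is bookkeeping built on results already proved earlier in the paper.
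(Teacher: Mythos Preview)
The paper states this corollary without proof, so there is no argument of the authors to compare against; your outline is the natural one and almost certainly what is intended. The construction of the smallest $z^\circ$-submodule via $N^\ast=\bigcap_{L\in\mathcal{F}}L$ is correct (Remark~\ref{r9.1} covers arbitrary intersections, and properness is automatic since $N^\ast$ sits inside any single member of $\mathcal{F}$). The Zorn argument for a maximal element of $\mathcal{F}$ is also correct, and you are right that Proposition~\ref{ppp9.1}(a) is precisely what forces the union of a chain to stay proper. One small point you leave implicit but should spell out: a maximal element $Q$ of $\mathcal{F}$ is automatically maximal among \emph{all} $z^\circ$-submodules of $M$, since any $z^\circ$-submodule containing $Q$ also contains $N$ and hence lies in $\mathcal{F}$.

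There is, however, a genuine hypothesis mismatch in your last step. Corollary~\ref{c97.10}(c) is stated for a reduced multiplication module satisfying Property~$dac$, whereas Corollary~\ref{c97.12} assumes only Property~$\mathcal{T}$; these are distinct conditions and the paper nowhere claims one implies the other. The result behind Corollary~\ref{c97.10}(c), namely Theorem~\ref{t97.9}, likewise requires Property~$dac$. So your invocation of Corollary~\ref{c97.10}(c) is not justified by the hypotheses you actually have. Either the authors tacitly intend Property~$dac$ to be in force here as well (so that Theorem~\ref{t97.9} applies to a minimal prime over $Q$, forcing $Q$ to equal that prime), or they have in mind a direct primeness argument using Property~$\mathcal{T}$ that is not recorded in the paper. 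In either case, the primeness of the maximal $z^\circ$-submodule is not available from the results you cite, and this step of your proof is a gap rather than bookkeeping.
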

\bibliographystyle{amsplain}

\end{document}